\documentclass{amsart}
\usepackage[alphabetic, msc-links, backrefs]{amsrefs}
\usepackage{amssymb}
\usepackage{amsthm}
\usepackage{apptools}
\usepackage{chngcntr}
\usepackage{color}
\usepackage{enumerate}
\usepackage{graphicx}
\usepackage{hyperref}
\usepackage{verbatim}
\usepackage[dvipsnames]{xcolor}
\numberwithin{equation}{section}

\newtheorem{theorem}            {Theorem}       [section]      
\newtheorem{lemma}              [theorem]       {Lemma}
\newtheorem{corollary}          [theorem]       {Corollary}
\newtheorem{proposition}        [theorem]       {Proposition}
\newtheorem{example}            [theorem]       {Example}

\newtheorem*{theorem*}                          {Theorem}
\theoremstyle{definition}

\theoremstyle{definition}
\newtheorem{remark}             [theorem]       {Remark}

\allowdisplaybreaks[1]   

\title[Rigidity and Non-existence Results of $\lambda$-Translating Solitons]{Rigidity and Non-existence Results of $\lambda$-Translating Solitons}

\author{Xiang Li$^\ast$}
\author{Jun Sun}

\thanks{
2020 MR Subject Classification: 53C24, 53C40\\
The first author was supported by the National Natural Science Foundation of China (Grant No. 12601094).
The second author was supported by the National Natural Science Foundation of China (Grant No. 12671074, 12531002, 12271039).}	
\thanks{$\ast$ Corresponding author.}

\address{$^1$School of Mathematical Sciences, Jiangsu University, Zhenjiang 212013, China}
\email{lihsiang@mail.ustc.edu.cn}

\address{$^2$School of Mathematics and Statistics, Wuhan University, Wuhan 430072, China}
\email{sunjun@whu.edu.cn}

\begin{document}

\begin{abstract}
A $\lambda$-translating soliton is a hypersurface in $\mathbb{R}^{n+1}$
satisfying $H=\langle \mathbf{T},\nu\rangle+\lambda$; equivalently, it has constant
weighted mean curvature with respect to the log-linear density
$e^{\langle T,X\rangle}$, and is an eternal solution of the mean curvature
flow with a constant forcing term. In this paper, we first prove that every
complete properly immersed $\lambda$-translating soliton with $\lambda>0$ and
$\inf_{\Sigma}H>\lambda$ has at least exponential volume growth, in contrast
with the linear growth of ordinary translating solitons. 
We then prove sharp non-existence results for graphic $\lambda$-translating solitons ($\lambda\geqslant 0$) with bounded gradient, and two rigidity theorems.

\end{abstract}

\noindent\keywords{$\lambda$-translating soliton, volume growth, rigidity theorem}

\maketitle

\section{Introduction}

\vspace{.1in}

Fix a constant vector \( \mathbf{T} \neq 0 \) in \( \mathbb{R}^{n+1} \) and a real number \( \lambda \). 
A surface \( \Sigma^n \subset \mathbb{R}^{n+1} \) is said to be a \emph{\( \lambda \)-translating soliton} if it is a critical point of the weighted area functional
\[
\mathcal{A} \triangleq \int_{\Sigma} e^{\langle \mathbf{T},X\rangle}\,d\mu
\]
with given weighted volume for the domain  enclosed by $\Sigma$. Here $X$ is the position vector of ${\mathbb R}^{n+1}$, and $d\mu$ is the induced area form on $\Sigma$.

A \( \lambda \)-translating soliton satisfies the equation  
\begin{equation}\label{eq-lambdatranslatingsoliton}
	H = \langle \mathbf{T}, \nu \rangle + \lambda,
\end{equation}
where \( \nu \) is the unit inner normal vector of \( \Sigma \), \( H \) is the mean curvature of \( \Sigma \), and we denote by \( \langle \cdot, \cdot \rangle \) the standard inner product on \( \mathbb{R}^{n+1} \).  
Without loss of generality, in this paper we always assume that $|\mathbf{T}| = 1$.

The variational definition above already contains a precise geometric
meaning of the parameter $\lambda$. Endow $\mathbb{R}^{n+1}$ with the
log-linear density $e^{\langle \mathbf{T},X\rangle}$, that is, measure volumes
and perimeters with respect to $e^{\langle \mathbf{T},X\rangle}\,dv$ and
$e^{\langle \mathbf{T},X\rangle}\,d\mu$. Such \emph{manifolds with density} are a
standard setting for the weighted isoperimetric problem (\cite{morgan2005manifolds},
\cite{rosales2008isoperimetric}). The first variation formula in this setting shows that
$\Sigma$ is a critical point of the weighted area under the
weighted-volume constraint if and only if its \emph{weighted mean
	curvature}
\[
H_{\phi}\triangleq H-\langle \mathbf{T},\nu\rangle
\]
is constant, and the Lagrange multiplier of the volume constraint is
exactly $\lambda$. In other words, $\lambda$-translating solitons are
precisely the surfaces of constant weighted mean curvature in
$(\mathbb{R}^{n+1},e^{\langle T,X\rangle}dv)$, in the same way that
constant mean curvature surfaces are the critical points of the ordinary
area functional under a volume constraint. Consequently,
$\lambda$-translating solitons are the natural candidates for the
boundaries of isoperimetric regions in the weighted isoperimetric
problem, and the
parameter $\lambda$ plays the role of a ``weighted pressure'': it is the
constant value of the weighted mean curvature enforced by the volume
constraint, the borderline case $\lambda=0$ being the weighted
area-stationary surfaces.

The class also plays a direct role in geometric evolution, which goes
beyond the variational motivation. Rearranging equation \eqref{eq-lambdatranslatingsoliton} as
\[
\langle \mathbf{T},\nu\rangle=H-\lambda,
\]
we see that $\Sigma$ is a $\lambda$-translating soliton if and only if
the family $\Sigma+t\mathbf{T}$ evolves by the \emph{forced mean curvature flow}
\[
\frac{\partial X}{\partial t}=(H-\lambda)\nu,
\]
that is, the mean curvature flow with a constant forcing term in the
normal direction. 
Such forced flows have been studied in both Euclidean and Minkowski spaces (\cite{liu2007evolution}, \cite{aarons2006mean}); in particular, for a constant forcing term, in the Minkowski space setting, Aarons \cite{aarons2006mean} proved that solutions of the forced flow converge, depending on the behaviour at infinity, either to a constant mean curvature hypersurface or to a translating solution of the forced flow, that is, precisely to a $\lambda$-translating soliton in the Lorentzian sense.
Hence a $\lambda$-translating soliton is an eternal
translating solution of this forced flow. When
$\lambda=0$, the forced flow reduces to the mean curvature flow itself;
in this case $\lambda$-translating solitons are also called translating
solitons, which are important models of the Type II singularities of the
mean curvature flow (\cite{hamilton1995harnack}, \cite{huisken1999mean}). When $\lambda\neq0$,
the additional term $-\lambda\nu$ represents a constant external driving
force on the evolving interface, and looking for translating solutions
of such a forced flow leads precisely to equation \eqref{eq-lambdatranslatingsoliton}. 
We should point out that the Type II singularity interpretation mentioned above belongs to the case $\lambda=0$: for $\lambda\neq0$ it is not known whether $\lambda$-translating solitons occur as blow-up models of the forced flow. Our motivation for the general $\lambda$-dependent class is instead twofold: the weighted isoperimetric problem described above, and the forced flow picture, in which the parameter $\lambda$ captures the strength and the sign of the constant forcing on the evolving interface.

Recently, López (\cite{lopez2018invariant}) classified  all \( \lambda \)-translating solitons in \( \mathbb{R}^3 \) that are invariant by a one-parameter group of translations and a one-parameter group of rotations. 
In \cite{lopez2018compact}, he studied the shape of a compact \( \lambda \)-translating soliton of \( \mathbb{R}^3 \) in terms of the geometry of its boundary, obtaining some necessary conditions for the existence of two-dimensional compact \( \lambda \)-translating solitons with a given closed boundary curve. 
In particular, he proved that there does not exist any closed \( \lambda \)-translating soliton of dimension two (actually, his argument also holds for any dimensional case).  

\vspace{.1in}

In the first part of the paper, we consider the volume growth of $\lambda$-translating solitons. 
Understanding the volume growth of certain geometric solitons is always a fundamental and interesting topic in the study of geometric flows, such as the volume growth of translating solitons for mean curvature flow. 
For instance, Guang (\cite{guang2019volume}) proved that every complete properly immersed translating soliton has at least linear volume growth.
Nguyen \cite{nguyen2015doubly} proved that translating solitons do not necessarily have Euclidean volume growth.
For $\lambda$-translating solitons, we prove the following result concerning its volume growth property:

\begin{theorem}
	Let $\Sigma^n \subset \mathbb{R}^{n+1}$ be a complete properly immersed $\lambda$-translating soliton with $\lambda>0$ and $H_{\inf}>\lambda,$ where $\inf_{\Sigma}H:\triangleq H_{\inf}$. Then for any $x \in \Sigma$, there exists a constant $C$ such that
	\[
	\operatorname{Vol}(\Sigma \cap B_r(x)) \geqslant \frac{\tilde{V}(1)}{e^{\big(1+\lambda(H_{\inf}-\lambda)\big)}}\bigl(\frac{1+\lambda(H_{\inf}-\lambda)}{\lambda(H_{\inf}-\lambda)}\bigr)e^{\lambda(H_{\inf}-\lambda)r}+C, 
	\]
    for all $r \geqslant 1$, where $\tilde{V}(r)$ is the weighted area defined by (\ref{e-weighted-area}) and 
    \begin{equation*}
        C=V(1)-\frac{\tilde{V}(1)}{e}\left(\frac{1+\lambda(H_{\inf}-\lambda)}{\lambda(H_{\inf}-\lambda)}\right).
    \end{equation*}
\end{theorem}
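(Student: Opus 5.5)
The plan is to reduce the statement to a first-order differential inequality for the weighted area $\tilde V(r)$ of (\ref{e-weighted-area}), integrate it, and then convert the result into a bound on $V(r)=\operatorname{Vol}(\Sigma\cap B_r(x))$. I have not reconstructed the precise form of (\ref{e-weighted-area}), so the central identity below is a guess that the stated constants are meant to confirm. Throughout, set $a=\lambda(H_{\inf}-\lambda)>0$.

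\textbf{Pointwise ingredients.} With the inner normal convention we have $\Delta_\Sigma X=H\nu$. Hence
\[
\Delta_\Sigma\langle \mathbf T,X\rangle=H\langle\mathbf T,\nu\rangle=H(H-\lambda),
\qquad
\tfrac12\Delta_\Sigma|X-x|^2=n+H\langle X-x,\nu\rangle .
\]
The hypothesis $H\geqslant H_{\inf}>\lambda>0$ gives $\langle \mathbf T,\nu\rangle=H-\lambda\geqslant H_{\inf}-\lambda>0$. Consequently
\[
H(H-\lambda)=\lambda(H-\lambda)+(H-\lambda)^2\geqslant a+(H-\lambda)^2 .
\]
This lower bound is exactly where the constant $a=\lambda(H_{\inf}-\lambda)$ should enter. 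The term $(H-\lambda)^2$ is also the normal part of $|\mathbf T|^2$, since $|\mathbf T^\top|^2=1-(H-\lambda)^2$. So $\mathbf T^\top$ (the tangential part of $\mathbf T$) has a divergence that is uniformly positive, and it is controlled by $|\mathbf T^\top|\leqslant1$.

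\textbf{Integration on balls.} Next, integrate the divergence identity for a suitable tangential field against the weight on $\Sigma\cap B_r(x)$. Properness ensures these sets are compact and that the coarea formula applies for a.e.\ $r$. The natural field is $e^{\langle\mathbf T,X\rangle}$-weighted, or alternatively $\mathbf T^\top$ multiplied by a radial cutoff $(r-|X-x|)_+$. The interior term is bounded below by $a\,\tilde V(r)$. The boundary term on $\Sigma\cap\partial B_r$ is bounded above, using $|\mathbf T^\top|\leqslant 1$ and $|\nabla^\Sigma|X-x||\leqslant1$, by the $r$-derivative $\tilde V'(r)$. This is what coarea gives if $\tilde V$ is the cutoff/weighted quantity whose derivative is the boundary integral. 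The outcome should be the inequality
\[
\tilde V'(r)\geqslant a\,\tilde V(r)\quad(r\geqslant1),
\qquad\text{hence}\qquad
\tilde V(r)\geqslant\tilde V(1)e^{a(r-1)}.
\]

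\textbf{From $\tilde V$ to $V$.} Finally, compare $V$ with $\tilde V$ so as to obtain
\[
V'(r)\geqslant (1+a)\,\tilde V(1)\,e^{a(r-1)-1}.
\]
Here the extra factor $(1+a)/e$ comes from the extra $r$-dependence of the weight, for instance $\tilde V'(r)=V(r)+(\text{boundary part})$, together with the elementary estimate used to absorb it. Integrating from $1$ to $r$ then gives
\[
V(r)\geqslant V(1)+\frac{(1+a)\tilde V(1)}{a\,e^{1+a}}\bigl(e^{ar}-e^{a}\bigr).
\]
This is exactly the stated inequality, with the stated value of $C$, since $\frac{(1+a)\tilde V(1)}{a\,e^{1+a}}\,e^{a}=\frac{\tilde V(1)}{e}\cdot\frac{1+a}{a}$.

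\textbf{Main obstacle.} The hard step is the middle one. The boundary term must be controlled by $\tilde V'(r)$ with constant exactly $1$, with no lower-order error, and the interior divergence must be bounded below by $a\tilde V$ rather than by something involving the unsigned quantity $H\langle X-x,\nu\rangle$. I would first try the field $e^{\langle\mathbf T,X\rangle}$-weighted $\mathbf T^\top$ cut off by $(r-|X-x|)_+$, since its divergence produces $H(H-\lambda)+|\mathbf T^\top|^2\geqslant a$ without any position-vector terms. If the bookkeeping instead produces an additive term, I would keep it and integrate the resulting linear ODE for $\tilde V$ exactly.
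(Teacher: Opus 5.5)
There is a genuine gap, and it is in your last two steps. The field you pick, $e^{\langle\mathbf T,X\rangle}\mathbf T^\top=\nabla e^{\langle\mathbf T,X\rangle}$, is the right one, and no cutoff is needed. Here $\tilde V(r)=\int_{\Sigma\cap B_r}e^{\langle\mathbf T,X\rangle}d\mu$ is the plain weighted area. By coarea, $\tilde V'(r)=\int_{\partial B_r\cap\Sigma}e^{\langle\mathbf T,X\rangle}|X|/|X^\top|$, and this dominates the boundary flux because $|\mathbf T^\top|\le 1\le|X|/|X^\top|$.

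The problem is that you bound the divergence too crudely. You already have $H(H-\lambda)\ge a+(H-\lambda)^2$ and $|\mathbf T^\top|^2=1-(H-\lambda)^2$. Adding them gives exactly
\[
\Delta e^{\langle\mathbf T,X\rangle}=e^{\langle\mathbf T,X\rangle}\bigl(1+\lambda(H-\lambda)\bigr)\ge(1+a)\,e^{\langle\mathbf T,X\rangle}.
\]
Dropping the $1$ leaves you with $\tilde V'\ge a\tilde V$. The correct inequality is $\tilde V'\ge(1+a)\tilde V$, i.e. $\tilde V(r)\ge\tilde V(1)e^{(1+a)(r-1)}$.

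That lost $1$ is exactly what the conversion to $V$ consumes, and your conversion step does not work as described. The only available comparison between the two quantities is pointwise. With the centre translated to the origin, as the paper does, $\langle\mathbf T,X\rangle\le|X|=r$ on $\partial B_r$, so $\tilde V'(r)\le e^{r}V'(r)$. This costs a factor $e^{-r}$. Combining gives $V'(r)\ge e^{-r}(1+a)\tilde V(r)\ge(1+a)\tilde V(1)e^{a(r-1)-1}$, which is your target. Integrating from $1$ to $r$ then finishes the proof as you wrote.

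Starting from your $\tilde V(r)\ge\tilde V(1)e^{a(r-1)}$, the same comparison only gives $V'(r)\ge a\tilde V(1)e^{a(r-1)-r}$. That is of order $e^{(a-1)r}$: not the claimed bound, and not even growth when $a\le1$. The mechanism you propose for the factor $(1+a)/e$, namely "$\tilde V'(r)=V(r)+$ boundary part", is false for the actual $\tilde V$. That factor is the ODE constant $1+a$ times $e^{-(1+a)}e^{a}$, which comes from the mismatch between the $e^{(1+a)r}$ growth of $\tilde V$ and the $e^{r}$ loss in the comparison.

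For a general centre $x$, the bound becomes $e^{\langle\mathbf T,X\rangle}\le e^{\langle\mathbf T,x\rangle+r}$. So you must either translate first or carry the factor $e^{\langle\mathbf T,x\rangle}$ through.
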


\begin{remark}
	Checking the proof of the above theorem in Section 3 carefully, we see that when $\lambda=0$, every complete properly immersed translating soliton has at least linear volume growth, which is the result obtained by Guang (\cite{guang2019volume}). However, when $\lambda>0$ and $H_{\inf}-\lambda>0$, the complete properly immersed $\lambda$-translating soliton has at least exponential volume growth.
\end{remark}

\vspace{.1in}

In the second part of the paper, we can give a non-existence result of graphic
$\lambda$-translating solitons in $\mathbb{R}^3$ and a non-existence result of rotationally symmetric graphic $\lambda$-translating solitons in $\mathbb{R}^{n+1}$.  

By definition, after a translation and rotation, any $\lambda$-translating soliton $\Sigma^n$ in $\mathbb{R}^{n+1}$ with positive $H-\lambda$ can be represented as a graph of some function $u$. 
When a $\lambda$-translating soliton $\Sigma^n$ in $\mathbb{R}^{n+1}$ can be represented as a graph,
it is easy to see that \eqref{eq-lambdatranslatingsoliton} can be written as
\begin{equation}\label{eq-graphlambdatranslatingsoliton}
	\operatorname{div}\left(\frac{Du}{\sqrt{1+|Du|^2}}\right)=\frac{1}{\sqrt{1+|Du|^2}}+\lambda.
\end{equation}
In this case, $\mathbf{T}=(0,\dots,0,1)$. We first prove that

\begin{theorem}\label{Theoremnoentire1}
	When $n = 2$ and $\lambda\geqslant0$, there is no entire solution to the equation \eqref{eq-graphlambdatranslatingsoliton}
	with bounded gradient.
\end{theorem}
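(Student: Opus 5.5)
The plan is a flux (divergence theorem) argument on large disks. We compare the boundary flux of the vector field $Du/\sqrt{1+|Du|^2}$, which grows linearly in the radius, with the integral of the right-hand side of \eqref{eq-graphlambdatranslatingsoliton}, which the bounded gradient forces to grow quadratically. We argue by contradiction: suppose $u\in C^2(\mathbb{R}^2)$ is an entire solution of \eqref{eq-graphlambdatranslatingsoliton} with $\lambda\geqslant 0$ and $|Du|\leqslant L$ on $\mathbb{R}^2$ for some constant $L$.

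\emph{Step 1 (lower bound for the integrand).} Set $W=\sqrt{1+|Du|^2}$. The bounded gradient gives $W\leqslant\sqrt{1+L^2}$ everywhere. Since $\lambda\geqslant 0$, the right-hand side of \eqref{eq-graphlambdatranslatingsoliton} satisfies
\[
\frac{1}{W}+\lambda\;\geqslant\;\frac{1}{\sqrt{1+L^2}}>0 .
\]
This is the only place where the sign condition $\lambda\geqslant0$ enters.

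\emph{Step 2 (integrate over the disk $B_R$ of radius $R$ centered at the origin).} By the divergence theorem, with $\mathbf{n}$ the outward unit normal of $\partial B_R$,
\[
\int_{\partial B_R}\Big\langle \frac{Du}{W},\mathbf{n}\Big\rangle\,ds=\int_{B_R}\Big(\frac{1}{W}+\lambda\Big)\,dx\;\geqslant\;\frac{\pi R^2}{\sqrt{1+L^2}} .
\]
Since $|Du|/W<1$ pointwise, the left-hand side is at most the length of the boundary circle, $2\pi R$.

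\emph{Step 3 (conclude).} Combining the two bounds gives $\pi R^2/\sqrt{1+L^2}\leqslant 2\pi R$, that is, $R\leqslant 2\sqrt{1+L^2}$ for every $R>0$. This fails for $R$ large, which is a contradiction, so no such entire solution exists.

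I do not expect a serious obstacle. The only technical point is regularity: the divergence theorem on $B_R$ requires $u\in C^2$, which holds for classical solutions, and by elliptic regularity it holds for any reasonable weak solution once the gradient is bounded. It is worth recording that the same computation works verbatim in any dimension $n$, since the right-hand side integrates to at least $c\,R^n$ while the flux is at most $|\partial B_R|\sim R^{n-1}$. The restriction $n=2$ in the statement is therefore not essential to this argument, and if the authors instead use a more refined two-dimensional tool, the flux argument above would serve as an independent check.
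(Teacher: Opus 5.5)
Your argument is correct, and it takes a genuinely different and much more elementary route than the paper.

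\textbf{What the paper does.} The paper never uses the divergence structure of \eqref{eq-graphlambdatranslatingsoliton}. It first proves Proposition~\ref{Proposition-infH-lambda}: a complete properly immersed $\lambda$-translator in $\mathbb{R}^3$ with $\lambda\geqslant 0$, $\langle \mathbf{T},\nu\rangle>0$ and quadratic area growth must have $\inf_\Sigma\langle\mathbf{T},\nu\rangle=0$. The proof derives $\Delta f=|B|^2f+\frac{2}{f}|\nabla f|^2-\langle \mathbf{V},\nabla f\rangle$ for $f=1/\langle\mathbf{T},\nu\rangle$, runs a weighted integral estimate with tuned $\epsilon$ and $p$, and closes with a logarithmic cutoff. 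It then observes that a bounded-gradient entire graph has $H-\lambda\geqslant (1+M^2)^{-1/2}$ and quadratic area growth, which contradicts the proposition. The restriction $n=2$ enters through the logarithmic cutoff (which relies on quadratic area growth) and through $|B|^2\geqslant\frac12 H^2$.

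\textbf{What your argument buys.} Your flux comparison, boundary term $\lesssim R^{n-1}$ against interior term $\gtrsim R^n$, is the classical Heinz-type argument. It is dimension-free, so it also yields Theorem~\ref{Theoremnoentire2} in every dimension without any rotational symmetry. For $\lambda>0$ it shows that no entire solution exists at all, because the right-hand side is then $\geqslant\lambda$ and no gradient bound is needed. The bound is only genuinely needed at $\lambda=0$, where the bowl soliton shows it cannot be dropped.

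\textbf{What the paper's route buys.} Proposition~\ref{Proposition-infH-lambda} is intrinsic: it applies to properly immersed surfaces that need not be graphs over the whole plane. It assumes only a sign on $\langle\mathbf{T},\nu\rangle$ and an area-growth bound, whereas your computation on disks $B_R\subset\mathbb{R}^2$ needs the global graph structure.
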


\begin{remark}
    The assumption \(\lambda \geqslant 0\) in Theorem \ref{Theoremnoentire1} is necessary: when \(\lambda \in (-1,0)\), the affine function \(u(x)=\overrightarrow{a}\cdot \overrightarrow{x}+b\) (taking \(|\overrightarrow{a}|=\sqrt{1/\lambda^2-1}\)) is a bounded gradient entire solution of \eqref{eq-graphlambdatranslatingsoliton}, where $\overrightarrow{a},\overrightarrow{x}\in\mathbb{R}^n$ and $b\in\mathbb{R}$.
\end{remark}

Furthermore, when a $\lambda$-translating soliton $\Sigma^n$ in $\mathbb{R}^{n+1}$ is a radial graph of some function $u=u(r)$ , \eqref{eq-graphlambdatranslatingsoliton} can be written as 
\begin{equation}\label{eq-rotationalgraphlambdatranslatingsoliton}
    \frac{u''}{\bigr(1+(u')^{2} \bigl)^{\frac{3}{2}}}+\frac{(n-1)u'}{r\sqrt{1+(u')^2}}=\frac{1}{\sqrt{1+(u')^{2} }}+\lambda.
\end{equation}
We can show that

\begin{theorem}\label{Theoremnoentire2}
    There does not exist a rotationally symmetric graphical complete $\lambda$-translating soliton
    with bounded gradient for $\lambda\geqslant0$.
\end{theorem}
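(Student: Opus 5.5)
Our plan is to reduce \eqref{eq-rotationalgraphlambdatranslatingsoliton} to a first-order ODE for the angle function and then integrate the divergence form of \eqref{eq-graphlambdatranslatingsoliton} over balls. We write $\varphi(r)=u'(r)/\sqrt{1+(u')^2}\in(-1,1)$. Since $u$ is a smooth entire radial function, $\varphi(0)=0$. The equation becomes
\[
\frac{1}{r^{n-1}}\bigl(r^{n-1}\varphi\bigr)'=\sqrt{1-\varphi^2}+\lambda .
\]
The bounded-gradient hypothesis $|u'|\leqslant M$ gives the uniform bound $|\varphi|\leqslant \varphi_0:=M/\sqrt{1+M^2}<1$. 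It follows that $\sqrt{1-\varphi^2}\geqslant \sqrt{1-\varphi_0^2}=:\delta>0$.

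The key step is to integrate from $0$ to $r$, which yields
\[
r^{n-1}\varphi(r)=\int_0^r s^{n-1}\bigl(\sqrt{1-\varphi(s)^2}+\lambda\bigr)\,ds\geqslant \frac{(\delta+\lambda)r^n}{n}.
\]
This gives $\varphi(r)\geqslant (\delta+\lambda)r/n$. Here $\lambda\geqslant0$ is used: it guarantees $\delta+\lambda\geqslant\delta>0$, so the right-hand side is strictly positive and grows linearly. For $r>n/(\delta+\lambda)$ this forces $\varphi(r)>1$, which is a contradiction. This is exactly the flux argument: the integral of the divergence over $B_r$ equals the boundary flux, which is at most $|\partial B_r|$, while the integrand is at least $\delta>0$, so the left side grows like $r^n$ against $r^{n-1}$.

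The argument does not even need the uniform bound $\delta$ if one only uses $\lambda>0$. However, $\lambda=0$ is allowed, and then the bounded gradient is essential, since bowl-type solutions with $\varphi\to1$ exist. So the bound $|u'|\leqslant M$ is exactly what keeps the integrand bounded below.

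The main obstacle is interpreting \emph{complete} rotationally symmetric graph correctly. If the graph is over an annulus or exterior domain rather than all of $\mathbb{R}^n$ (no point on the axis), we cannot integrate from $0$, and the boundary term at the inner radius must be handled. We will argue that completeness together with $|u'|\leqslant M$ forces $u$ to extend to a graph over the whole domain up to its boundary with finite height. Near a finite inner boundary radius a complete graph would need $|u'|\to\infty$, which is excluded. So the domain is either all of $\mathbb{R}^n$ or the exterior $\{r>r_0\}$ with $r_0=0$ in the punctured case, and in the latter case removability at the origin follows from the bounded gradient. We then integrate from $r_0$ to $r$: the boundary term $r_0^{n-1}\varphi(r_0)$ is bounded, and the same growth estimate gives the contradiction. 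This domain bookkeeping is the step we expect to take the most care.
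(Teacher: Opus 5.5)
Your argument is correct, but it follows a different route from the paper.

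\textbf{The paper's route.} It argues purely asymptotically. Proposition \ref{Proposition-infH-lambda2} uses only $|u'|\leqslant M$ and a case analysis on $\liminf$ and $\limsup$ of $u''$ to find $r_i\to\infty$ with $u''(r_i)\to 0$. Evaluating \eqref{eq-rotationalgraphlambdatranslatingsoliton} at $r_i$, the left-hand side tends to $0$, while the right-hand side is bounded below by $1/\sqrt{1+M^2}+\lambda>0$.

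\textbf{Your route.} You use the divergence structure $(r^{n-1}\varphi)'=r^{n-1}\bigl(\sqrt{1-\varphi^2}+\lambda\bigr)$ and integrate once. This buys several things. It is quantitative: any radial solution with $|u'|\leqslant M$ can exist only on $r\leqslant n/(\delta+\lambda)$. It also shows that for $\lambda>0$ there is no entire radial solution at all, with or without a gradient bound. Moreover, the flux inequality $(\delta+\lambda)|\hat{B}(0,r)|\leqslant\int_{\partial\hat{B}(0,r)}\langle Du/\sqrt{1+|Du|^2},x/r\rangle\leqslant|\partial\hat{B}(0,r)|$ never uses symmetry. So the same two lines rule out entire solutions of \eqref{eq-graphlambdatranslatingsoliton} with bounded gradient in every dimension. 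That also gives Theorem \ref{Theoremnoentire1} for all $n$, without the quadratic area growth and logarithmic cutoff argument of Proposition \ref{Proposition-infH-lambda}. The paper's route, by contrast, needs the ODE and hence the symmetry, but it only looks at $r\to\infty$.

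\textbf{Domain bookkeeping.} Your final paragraph does more than is needed. The paper simply takes the graph to be entire. In any case you may integrate from any $r_1$ in the domain, since the boundary term is at most $r_1^{n-1}$ in absolute value, so no removability at the origin is required. All you use is that the domain contains a ray $(r_1,\infty)$, and this follows from completeness together with the Lipschitz bound on $u$.
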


\begin{remark}
    The assumption \(\lambda \geqslant 0\) in Theorem \ref{Theoremnoentire2} is necessary. For every \(\lambda \in (-1,0)\), equation \eqref{eq-rotationalgraphlambdatranslatingsoliton} admits a global solution \(u \in C^{\infty}[0,\infty)\) with \(u(0)=u'(0)=0\), satisfying \(0 < u' < m_{\lambda} := \sqrt{\lambda^{-2}-1}\) and \(u(r) = m_{\lambda} r - \frac{n-1}{\lambda^2} \log r + O(1)\) as \(r \to \infty\). Indeed, writing \(v = u'\), one has \(v' = (1+v^2)\left[1 + \lambda\sqrt{1+v^2} - \frac{(n-1)v}{r}\right]\); the sign of the right-hand side at \(v=0\) and \(v=m_{\lambda}\) forces \(0 < v < m_{\lambda}\) for all \(r > 0\). These are strictly convex entire rotationally symmetric graphs with bounded gradient, asymptotic to a cone of slope \(m_{\lambda}\); in \(\mathbb{R}^3\) they are exactly the rotational \(\lambda\)-translators classified by López\cite{lopez2018invariant}.
\end{remark}

\vspace{.1in}

As is known, there have been many rigidity theorems and classification theorems for translating solitons in the Euclidean space and the pseudo-Euclidean space. 
For example, Chen--Qiu (\cite{chen2016rigidity}) proved that there exists no complete \( m \)-dimensional spacelike translating soliton in \( \mathbb{R}^{m+n}_{n} \). 
The classification theorem also exists for \( \lambda \)-translating solitons. 
For example, Li--Qiao--Liu (\cite{li2020complete}) have classified
complete \( \lambda \)-translating solitons in the Euclidean space \( \mathbb{R}^3 \) and the Minkowski space \( \mathbb{R}^3_1 \) with second fundamental form of constant length. 
For the higher dimension \( n \), Li--Wei (\cite{li2023complete}) also obtain a similar classification for 3-dimensional complete \( \lambda \)-translating solitons in \( \mathbb{R}^4_1 \).
In the third part of the paper, we will give two rigidity results of
$\lambda$-translating solitons in $\mathbb{R}^{n+1}$.

For a submanifold in Euclidean space we can define its Gauss map. 
By studying this mapping, we can get a lot of information about submanifolds. Let \( \gamma_1 : \Sigma^n \to {\mathbb S}^n \) be the Gauss map of the translating soliton \( \Sigma^n \) (with \( \lambda = 0 \)), and the image \( \gamma_1(x) \) be the unit outer normal vector of \( \Sigma^n \).
Here, and only in the statement of Theorem 1.7, we use the outer unit
normal as in \cite{bao2014gauss}; since replacing $\nu$ by $-\nu$ reverses the signs of
$H$ and $B$ simultaneously and maps the Gauss image to its antipodal set,
this convention does not affect the statement. In the rest of the paper
$\nu$ always denotes the unit inner normal.
Bao--Shi (\cite{bao2014gauss}) proved the following Bernstein theorem for translating solitons, which can be seen as a generalization of classical Bernstein theorem for entire minimal graphs:

\begin{theorem}[\cite{bao2014gauss}]\label{baoshitheo}
	Let \( \Sigma^n \subset \mathbb{R}^{n+1} \) be an \( n \)-dimensional complete translating soliton with bounded mean curvature. 
	If the image of Gauss map \( \gamma_1 \) of \( \Sigma^n \) lies in a ball \( B_\Lambda^S(y_0) \) of \( S^n \), where \( \Lambda < \frac{\pi}{2} \), then \( \Sigma^n \) must be a hyperplane. 
	Here and in the sequel \( B_\Lambda^S(y_0) \) denotes the geodesic ball of radius \( \Lambda \) and with center \( y_0 \) in \( S^n \).
\end{theorem}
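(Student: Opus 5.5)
We will follow the strategy of the Ruh--Vilms and Hildebrandt--Jost--Widman approach, as adapted by Bao--Shi: compose the Gauss map with a suitable convex function on the sphere and apply a maximum principle at infinity. For a translating soliton ($\lambda=0$) the equation $H=\langle \mathbf{T},\nu\rangle$ makes the Gauss map a \emph{weighted harmonic map}. Differentiating the soliton equation and combining it with the Codazzi equations gives
\[
\Delta \nu + \nabla_{\mathbf{T}^\top}\nu = -|B|^2\nu .
\]
In other words, $\gamma_1$ satisfies $\tau(\gamma_1)+d\gamma_1(\mathbf{T}^\top)=0$, so it is a harmonic map with respect to the drift Laplacian
\[
\mathcal{L}=\Delta+\langle \mathbf{T}^\top,\nabla\cdot\rangle .
\]

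The first step is to verify this identity, which is a routine computation with Simons-type formulas. The second step uses the convexity of the image. Let $\phi(y)=1-\langle y,y_0\rangle$, or better a function of the spherical distance $\rho(y)=d(y,y_0)$. Since the image lies in $B_\Lambda^S(y_0)$ with $\Lambda<\pi/2$, we have $\langle\gamma_1,y_0\rangle\geqslant\cos\Lambda>0$. Set
\[
w=\langle \nu,y_0\rangle .
\]
Then $w$ satisfies
\[
\mathcal{L}w=-|B|^2 w\leqslant 0,\qquad w\geqslant \cos\Lambda>0 .
\]
Take $f=1/w$ (or the Hildebrandt--Jost--Widman function $\sec\rho$). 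A standard computation gives
\[
\mathcal{L}f\geqslant |B|^2 f+\frac{2|\nabla f|^2}{f},
\]
and more usefully $\mathcal{L}f\geqslant c(\Lambda)|B|^2$ together with a gradient term. So $f$ is a bounded $\mathcal{L}$-subharmonic function with a strictly positive forcing term proportional to $|B|^2$.

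The third step is to pass from this to $|B|\equiv0$. We would run a Cheng--Yau or Schoen--Simon--Yau type maximum principle argument on the auxiliary quantity $\psi=|B|^2 g(f)$, with $g$ chosen convex and increasing as in Ecker--Huisken's Bernstein theorem. Simons' identity for translators,
\[
\mathcal{L}|B|^2 = 2|\nabla B|^2-2|B|^4,
\]
combined with the equation for $f$, should yield an inequality of the form
\[
\mathcal{L}\psi\geqslant 2\varepsilon\,\psi^2 g^{-1}-C\,\frac{|\nabla\psi|^2}{\psi}.
\]
Multiplying by a cutoff $\eta=(a^2-|X-x_0|^2)_+^2$ on extrinsic balls and evaluating at an interior maximum, we aim for $\psi\leqslant C/a^2+C/a$. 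Letting $a\to\infty$ then forces $|B|\equiv0$, so $\Sigma$ is a hyperplane.

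The main obstacle is the cutoff estimate. Because of the drift term $\langle \mathbf{T}^\top,\nabla\eta\rangle$, the function $\mathcal{L}|X|^2=2n+2\langle \mathbf{T}^\top,X\rangle$ is not bounded independently of $a$. This is where the bounded mean curvature hypothesis enters. By the soliton equation, $|\mathbf{T}^\top|\leqslant 1$ and $H$ is bounded, and bounded $H$ combined with the Gauss image condition gives control on the error terms, yielding a bound like
\[
\mathcal{L}\eta\geqslant -C(1+a)a^2 .
\]
The resulting estimate $\psi\leqslant C/a$ still tends to zero. If this direct route is too lossy, the first fallback is to work with intrinsic distance and an Omori--Yau maximum principle for $\mathcal{L}$. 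That principle holds when the Bakry--\'Emery Ricci tensor is bounded below, which again follows from bounded $H$ and bounded $|B|$ locally. We would apply it to the bounded function $\psi$.
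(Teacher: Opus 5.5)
Your proposal is correct and follows essentially the same route as the paper, which defers to Bao--Shi's localization argument (sketched in the proof of Theorem~\ref{thm:main1} with $\lambda=0$): the Gauss map is harmonic for the drift Laplacian, and the Simons/Bochner identity $\mathcal{L}|B|^2=2|\nabla B|^2-2|B|^4$ is combined with $\mathcal{L}\langle\nu,y_0\rangle=-|B|^2\langle\nu,y_0\rangle$. The maximum principle is then applied to $(R^2-r^2)^2|B|^2\,G(\langle\nu,y_0\rangle)$, with the drift controlled by $|\mathcal{L}r^2|\leqslant C(1+R)$, and this gives $|B|^2=O(R^{-1})$. The only real difference is that you use the Ecker--Huisken weight $g(f)$ (for instance $g(f)=f^2/(1-kf^2)$ with $k<\cos^2\Lambda$) instead of Bao--Shi's $(b-\varphi(\gamma))^{-2}$, and it works equally well; note also that $\mathcal{L}|X|^2=2n+2\langle\mathbf{T},X\rangle$ rather than $2n+2\langle\mathbf{T}^{\top},X\rangle$, which does not change the bound.
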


On the other hand, Xin (\cite{xin2015translating}) proved several properties for translating solitons with arbitrary codimension. 
One of his Bernstein theorems is as follows:

\begin{theorem}[\cite{xin2015translating}]\label{thm-TSunderInt}
	Let \( \Sigma^n \) be a complete immersed translating soliton in \( \mathbb{R}^{n+1} \). 
	If \( \Sigma^n \) satisfies the integral conditions
	\[
	\left( \int_\Sigma |B|^n \right)^{\frac{1}{n}} < \sqrt{\frac{2(n-1)}{n^2\kappa}} \quad \text{and} \quad \int_\Sigma |B|^n e^{\langle \mathbf{T},X\rangle} < \infty,
	\]
	where \( \kappa \) is the Sobolev constant in Lemma \ref{lem:sobolev} and $B$ is the second fundamental form of $\Sigma^n$ in ${\mathbb R}^{n+1}$, then \( |B| \equiv 0 \) and \( \Sigma \) is a hyperplane.
\end{theorem}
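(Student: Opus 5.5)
The statement is Xin's Bernstein theorem (Theorem \ref{thm-TSunderInt}), which the paper quotes from \cite{xin2015translating}. I would prove it with the standard Simons-type inequality combined with the weighted Sobolev argument, working throughout with the drift Laplacian $\Delta_{\mathbf T} = \Delta + \langle \mathbf{T}, \nabla\,\cdot\,\rangle$. This operator is self-adjoint with respect to $e^{\langle \mathbf{T},X\rangle}d\mu$, which is the natural measure for a translating soliton.

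\textbf{Step 1: Simons-type inequality.} For a translator ($H=\langle \mathbf T,\nu\rangle$) one has $\Delta_{\mathbf T} B = -|B|^2 B$. Combined with Kato's inequality $|\nabla |B||^2\le \frac{n}{n+2}|\nabla B|^2$, this should give
\[
|B|\,\Delta_{\mathbf T}|B| \geqslant \tfrac{2}{n}|\nabla|B||^2 - |B|^4 .
\]

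\textbf{Step 2: integrate against a test function.} I would multiply the inequality by $|B|^{n-2}\varphi^2 e^{\langle \mathbf T,X\rangle}$, where $\varphi$ is a cutoff supported in $B_{2R}$ with $|\nabla\varphi|\le 1/R$, and integrate by parts in the weighted measure. Then I would pass back to the unweighted Michael--Simon Sobolev inequality of Lemma \ref{lem:sobolev}, applied to $f=|B|^{n/2}\varphi\, e^{\langle \mathbf T,X\rangle/2}$. Two facts make this work:
\begin{itemize}
\item the gradient of $e^{\langle \mathbf T,X\rangle/2}$ is $\frac12 \mathbf T^\top e^{\langle \mathbf T,X\rangle/2}$;
\item the mean-curvature term in Michael--Simon satisfies $|H|=|\langle\mathbf T,\nu\rangle|$.
\end{itemize}
The $\mathbf T$-contributions should then combine into $|\mathbf T^\top|^2+|\langle \mathbf T,\nu\rangle|^2=1$-type terms that can be absorbed or carry a good sign. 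The outcome I aim for is
\[
\Bigl(\tfrac{2(n-1)}{n^2}-\kappa\|B\|_{L^n}^2\Bigr)\int |\nabla (|B|^{n/2})|^2\varphi^2 e^{\langle \mathbf T,X\rangle}\leqslant \tfrac{C}{R^2}\int_{B_{2R}} |B|^n e^{\langle \mathbf T,X\rangle},
\]
using Hölder on the quartic term: $\int |B|^2 f^2\le \|B\|_{L^n}^2\|f\|_{L^{2n/(n-2)}}^2$.

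\textbf{Step 3: conclude.} The second hypothesis, $\int |B|^n e^{\langle \mathbf T,X\rangle}<\infty$, makes the right-hand side tend to $0$ as $R\to\infty$. The first hypothesis makes the coefficient on the left positive. Hence $|B|$ is constant. Since $|B|\in L^n$ and a translator has infinite volume (Guang's linear volume growth), the constant must be $0$, and $\Sigma$ is a hyperplane.

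\textbf{Main obstacle.} The hard part is the bookkeeping of the $\mathbf T$ terms in Step 2 when the weight is converted via $e^{\langle \mathbf T,X\rangle/2}$. The cross term $\langle \mathbf T^\top,\nabla f\rangle$ has to be handled by Cauchy--Schwarz so that exactly the constant $\frac{2(n-1)}{n^2}$ survives. I would try first to do all integrations in the weighted measure and use the identity $\Delta_{\mathbf T}e^{-\langle \mathbf T,X\rangle/2}$-type cancellations to eliminate the drift term, rather than estimating it directly.
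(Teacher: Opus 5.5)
\textbf{Overall.} Your scheme is the paper's. The paper only quotes this result from Xin, but its proof of Theorem~\ref{thm:main2} at $\lambda=0$ is exactly this statement, and it runs through your steps: the weighted Simons identity $\mathcal{L}|B|^2=2|\nabla B|^2-2|B|^4$, the test function $\eta^2|B|^{n-2}\rho$ with $\rho=e^{\langle \mathbf{T},X\rangle}$, Michael--Simon applied to $f=|B|^{n/2}\eta\rho^{1/2}$, H\"older, and absorption.

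\textbf{The drift term.} What you single out as the main obstacle needs no Cauchy--Schwarz. The cross term integrates by parts to $-\frac12\int|B|^n\eta^2\Delta\rho$. Then $\Delta\rho=\rho$ together with $H^2=1-|\mathbf{V}|^2$ gives the exact identity $\int|\nabla f|^2+\frac12\int H^2f^2=\int|\nabla(|B|^{n/2}\eta)|^2\rho-\frac14\int|\mathbf{V}|^2f^2$. So the constant $\frac{2(n-1)}{n^2}$ comes entirely from the Kato and Young steps, not from the drift.

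\textbf{Two steps are wrong as written}, though both are easy to repair.

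First, the refined Kato inequality $|\nabla|B||^2\leqslant\frac{n}{n+2}|\nabla B|^2$ is the minimal/CMC version. Its proof uses $\sum_i\nabla_k h_{ii}=0$, whereas on a translator $\nabla_kH=-h_{kj}V^j$. Indeed, Codazzi and this constraint permit the pointwise data $B=\mu\,e_1\otimes e_1$, $\nabla_1h_{11}=-\mu V^1\neq0$, all other $\nabla_kh_{ij}=0$, for which $|\nabla|B||=|\nabla B|$. You do not need it. The plain Kato inequality, as used in the paper, gives
\[
(n-1-\varepsilon)\int|B|^{n-2}|\nabla|B||^2\eta^2\rho\leqslant\int|B|^{n+2}\eta^2\rho+\varepsilon^{-1}\int|B|^n|\nabla\eta|^2\rho .
\]
Combined with $|\nabla(|B|^{n/2}\eta)|^2\leqslant\frac{n^2}{2}|B|^{n-2}|\nabla|B||^2\eta^2+2|B|^n|\nabla\eta|^2$, this already yields your coefficient, namely $2\bigl(\frac{2(n-1-\varepsilon)}{n^2}-\kappa\|B\|_{L^n}^2\bigr)$.

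Second, Step~3 appeals to Guang's linear volume growth, which is proved for \emph{properly} immersed translators; here only completeness is assumed. For the same reason, your cutoff should be built on intrinsic balls, which are relatively compact by completeness, not on extrinsic balls $B_{2R}$. No volume growth is needed. For $f_R=|B|^{n/2}\varphi_R\rho^{1/2}$ the Sobolev step gives
\[
\kappa^{-1}\|f_R\|^2_{L^{2n/(n-2)}}\leqslant\int|\nabla(|B|^{n/2}\varphi_R)|^2\rho\leqslant CR^{-2}\int_\Sigma|B|^n\rho\to0 ,
\]
and Fatou's lemma then gives $|B|\equiv0$ directly. This is how the paper concludes.
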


In this paper, we will generalize the above two theorems to the case that \( \Sigma \) is a \( \lambda \)-translating soliton. 
More precisely, for the Gauss map of $\lambda$-translating solitons $\gamma:\Sigma\to\mathbb{S}^n$, we first prove that

\begin{theorem}\label{thm-GaussMapoflambdatranslators}
	Let $\Sigma^n \subset \mathbb{R}^{n+1}$ be an $n$-dimensional complete $\lambda$-translating soliton with $\lambda B$ positive semi-definite. If the image of the Gauss map $\gamma$ of $\Sigma^n$ lies in a ball $B_\Lambda^{S^n}(y_0)$ of $\mathbb{S}^n$, where $\Lambda < \frac{\pi}{2}$, then $\Sigma^n$ must be a hyperplane.
\end{theorem}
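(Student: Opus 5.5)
The plan is to adapt the Bao--Shi argument (Theorem \ref{baoshitheo}) to the drift Laplacian $\Delta_{\mathbf{T}} := \Delta + \langle \mathbf{T}, \nabla \cdot\rangle$, which is the natural operator on $\lambda$-translating solitons. The argument has three steps.

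\emph{Step 1: equation for the Gauss map.} The first step is to compute the equation satisfied by the Gauss map $\gamma = \nu$. Differentiating \eqref{eq-lambdatranslatingsoliton}, and using $\nabla H = \nabla\langle \mathbf{T},\nu\rangle$ (the constant $\lambda$ drops out) together with the Codazzi equation, gives, up to sign conventions,
\[
\Delta \nu + \langle \mathbf{T}^{\top}, \nabla \nu\rangle = -|B|^2\nu .
\]
This is the same identity as in the case $\lambda=0$. Hence $\gamma:(\Sigma, e^{\langle \mathbf{T},X\rangle}d\mu)\to \mathbb{S}^n$ is a weighted harmonic map, i.e.\ harmonic for the drift Laplacian.

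\emph{Step 2: the test function.} Set $w=\langle \nu, y_0\rangle$. The Gauss image lies in $B_\Lambda^{S^n}(y_0)$ with $\Lambda<\pi/2$, so $w\geqslant \cos\Lambda>0$. The identity from Step 1 yields
\[
\Delta_{\mathbf{T}} w = -|B|^2 w \leqslant 0 .
\]
Following Bao--Shi, I would compose $w$ with a convex function $\phi$ on the ball, for instance
\[
h=\frac{1-w}{w}, \qquad \text{or} \qquad f=\phi(\mathrm{dist}_{S^n}(\gamma,y_0)).
\]
Such an $h$ satisfies a differential inequality of the form
\[
\Delta_{\mathbf{T}} h \geqslant c\,|B|^2 h + c'\frac{|\nabla h|^2}{h}\quad\text{or}\quad \Delta_{\mathbf{T}} h \geqslant c\,|\nabla \gamma|^2 .
\]

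\emph{Step 3: from the inequality to a hyperplane.} Where Bao--Shi use bounded mean curvature to run an Omori--Yau or Cheng--Yau type gradient estimate, I will use the hypothesis that $\lambda B$ is positive semi-definite. Two cases arise.

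If $\lambda=0$, then $\Sigma$ is an ordinary translating soliton, and I will argue that the mean-curvature hypothesis of Theorem \ref{baoshitheo} can be dropped in this setting, or otherwise reduce to that theorem directly. This reduction is itself an obstacle.

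If $\lambda\neq0$, the hypothesis says $\Sigma$ is convex in the appropriate orientation, so
\[
\lambda H \geqslant 0, \qquad |B|^2\leqslant H^2 .
\]
Moreover, $H=\langle \mathbf{T},\nu\rangle+\lambda$ gives $|H|\leqslant 1+|\lambda|$, so $H$ is automatically bounded. Convexity then also bounds $|B|$. With bounded curvature and a complete metric, I would apply the Omori--Yau maximum principle for $\Delta_{\mathbf{T}}$; it is valid because the drift $\mathbf{T}^\top$ is bounded and the Ricci curvature is bounded below via $|B|$. Applied to the bounded function $h$, this gives $|B|^2h\equiv 0$ at the supremum, and then, via the strong maximum principle for $w$, $B\equiv 0$. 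So $\Sigma$ is a hyperplane.

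The main obstacle is Step 3: justifying the maximum principle argument on a possibly non-proper complete hypersurface. In particular, I must verify that the curvature bound really follows from $\lambda B\geqslant 0$ together with the equation. For $\lambda>0$ this is immediate from $0\leqslant B$ and $\operatorname{tr}B=H\leqslant 1+\lambda$. I will also need to handle the borderline case $\lambda=0$ separately, where I would try a Cheng--Yau gradient estimate for weighted harmonic maps into a convex ball, in the style of Hildebrandt--Jost--Widman, instead of the curvature bound. I would try that gradient estimate first.
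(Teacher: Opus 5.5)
\textbf{The gap is in Step 3.} Omori--Yau applied to the bounded function $h=(1-w)/w$ (or to $\varphi\circ\gamma$) only gives points $x_k$ with $h(x_k)\to\sup h$ and $\Delta_{\mathbf T}h(x_k)\leqslant 1/k$. Hence $|B|(x_k)\to0$, but only along a sequence that may escape to infinity. The strong maximum principle for $w$ tells you something only if $\inf w$ is attained. A bounded $\Delta_{\mathbf T}$-subharmonic function on a noncompact weighted manifold need not be constant. So nothing in your argument carries the decay of $|B|$ along $x_k$ to all of $\Sigma$.

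You can see that the step cannot work as written, because apart from bounding $|B|$ it never uses $\lambda B\geqslant 0$. Weighted harmonicity of $\gamma$, the inequality $\Delta_{\mathbf T}h\geqslant |B|^2(1+h)$, and Omori--Yau (bounded drift, $\mathrm{Ric}\geqslant -C$) all hold for every $\lambda$ once $|B|$ is bounded. Now take $\lambda\in(-1,0)$ and the rotational graphs in the remark following Theorem \ref{Theoremnoentire2}. They are complete. Their principal curvatures are $\frac{v}{r\sqrt{1+v^2}}$ and $\frac{1}{\sqrt{1+v^2}}+\lambda-\frac{(n-1)v}{r\sqrt{1+v^2}}$ with $0<v<m_\lambda$, so $|B|$ is bounded. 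Their Gauss image lies in the ball of radius $\arctan m_\lambda<\frac{\pi}{2}$ about $\mathbf T$, and they are not hyperplanes. Of course $\lambda B\geqslant0$ fails there. On them $\inf w=|\lambda|$ is not attained and $|B|\to 0$ at infinity, which is exactly what Omori--Yau permits. Your Step 3 would nevertheless declare them flat.

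\textbf{What is missing} is a quantity whose drift Laplacian is bounded below by a superlinear function of itself; this is where $\lambda B\geqslant 0$ genuinely enters. The paper uses the Bochner formula of Lemma \ref{lem:bochner}, namely $\Delta_{\mathbf T}|\nabla\gamma|^2=2|\nabla d\gamma|^2-2|\nabla\gamma|^4+2\lambda B(\nabla\gamma,\nabla\gamma)$. The last term comes from $\nabla_iV^j=(H-\lambda)h_{ij}$; equivalently, the Bakry--\'Emery tensor is $\mathrm{Ric}-\nabla^2\langle\mathbf T,X\rangle=\lambda B-B^2$. It is dropped using $\lambda B\geqslant 0$. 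With $\varphi=1-\cos\overline{d}$ and $\phi=|\nabla\gamma|^2/(b-\varphi\circ\gamma)^2$ this gives
$\Delta_{\mathbf T}\phi\geqslant 2(1-b)(b-\varphi)\phi^2+\frac{2\langle\nabla\varphi,\nabla\phi\rangle}{b-\varphi}$.
The paper then finishes, uniformly in $\lambda$, with Bao--Shi's cutoff $(R^2-r^2)^2\phi$, which needs only $|H|\leqslant 1+|\lambda|$ and $|\mathbf V|\leqslant 1$. So the Cheng--Yau/Hildebrandt--Jost--Widman-type gradient estimate you keep as a fallback for $\lambda=0$ is the right tool for every $\lambda$.

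Inside your own framework for $\lambda\neq0$, you could instead apply Omori--Yau to $\phi$, which is bounded by convexity. Since $|\nabla\varphi|\leqslant|\nabla\gamma|=(b-\varphi)\phi^{1/2}$, along an Omori--Yau sequence you get $2(1-b)(b-\varphi)\phi^2\leqslant \frac1k+2\phi^{1/2}|\nabla\phi|\to 0$, so $\phi\equiv 0$. This variant uses only intrinsic completeness.

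Finally, $\lambda=0$ is no obstacle at all: there $|H|=|\langle\mathbf T,\nu\rangle|\leqslant 1$, so Theorem \ref{baoshitheo} applies verbatim.
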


\begin{remark}
Unlike Theorem \ref{baoshitheo}, Theorem \ref{thm-GaussMapoflambdatranslators} requires no boundedness assumption on the
mean curvature, since $|H|\leqslant1+|\lambda|$ is a direct consequence of the
soliton equation \eqref{eq-lambdatranslatingsoliton}. 
\end{remark}

Here we present two examples that do not satisfy the Gauss image condition of Theorem \ref{thm-GaussMapoflambdatranslators} (\cite{lopez2018invariant}).

\begin{example}
	A circular cylinder of radius $r>0$ whose axis is parallel to ${\bf T}$ is a $1/r$-translating soliton.
\end{example}

\begin{example}
	The generating curve $\alpha(s) = \bigl(y(s), z(s)\bigr)$ of a cylindrical $\lambda$-translating soliton $X(s,t) = \bigl(t, y(s), z(s)\bigr)$, $s \in \mathbb{R}$, $t \in \mathbb{R}$ is:
	\[
	y(s) = -\lambda s + 2\arctan\left(
	\frac{\lambda + 1}{\sqrt{1 - \lambda^2}}
	\tanh\left(\frac{\sqrt{1 - \lambda^2}}{2}s\right)
	\right),
	\]
	\[
	z(s) = \log\left|-\lambda + \cosh\bigl(\sqrt{1 - \lambda^2}s\bigr)\right|,
	\]
	where $0 < \lambda < 1$.
\end{example}

Inspired by Theorem \ref{thm-TSunderInt}, we derive integral estimates for the squared norm of the second fundamental form \( |B|^2 \) in terms of the Sobolev constant by using the Sobolev inequality and the Bochner type inequality for \( \lambda \)-translating soliton. 
This leads to the following rigidity theorem:

\begin{theorem}
	Let \( \Sigma^n \) be a complete immersed \( \lambda \)-translating soliton in \( \mathbb{R}^{n+1} \). 
	If \( \lambda H\leqslant 0 \), and \( \Sigma^n \) satisfies integral conditions
	\[
	\left( \int_\Sigma |B|^n \right)^{\frac{2}{n}} + |\lambda| \left( \int_\Sigma |B|^{\frac{n}{2}} \right)^{\frac{2}{n}} < \frac{2(n-1)}{n^2\kappa} \quad \text{and} \quad \int_\Sigma |B|^n e^{\langle \mathbf{T},X\rangle} < \infty,
	\]
	where \( \kappa \) is the Sobolev constant, then \( |B| \equiv 0 \) and \( \Sigma \) is a hyperplane.
\end{theorem}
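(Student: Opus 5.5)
The plan is to follow Xin's argument for Theorem \ref{thm-TSunderInt}. It has four ingredients: a Simons-type identity for the drift Laplacian $\mathcal{L}=\Delta+\langle \mathbf{T},\nabla\,\cdot\,\rangle$, a Kato inequality, a weighted cut-off integration, and the Sobolev inequality of Lemma \ref{lem:sobolev}. The new term coming from $\lambda$ is absorbed by H\"older's inequality.

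\emph{Step 1: Simons identity.} Differentiating \eqref{eq-lambdatranslatingsoliton} gives $H_{,i}=\langle \mathbf{T},\nu\rangle_{,i}=-h_{ik}\langle \mathbf{T},e_k\rangle$, and hence
\[
H_{,ij}=-h_{ijk}\langle \mathbf{T},e_k\rangle-h_{ik}h_{kj}\langle \mathbf{T},\nu\rangle .
\]
Insert this into Simons' identity $\Delta h_{ij}=H_{,ij}+Hh_{ik}h_{kj}-|B|^2h_{ij}$. Contracting with $h_{ij}$ and using $H-\langle \mathbf{T},\nu\rangle=\lambda$ should give
\[
\tfrac12\mathcal{L}|B|^2=|\nabla B|^2-|B|^4+\lambda\,\mathrm{tr}(B^3).
\]
The cubic term is bounded by $\lambda\,\mathrm{tr}(B^3)\leqslant|\lambda|\,|B|^3$. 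I expect the sign hypothesis $\lambda H\leqslant 0$ to enter here or in the Kato step, where a term with the factor $\lambda H$ can be discarded.

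\emph{Step 2: Kato.} I will use the Codazzi equations together with the formula for $\nabla H$ above to get a Kato inequality of the form $|\nabla B|^2\geqslant(1+\tfrac{2}{n})|\nabla|B||^2$ up to terms of definite sign. This is the step I expect to be the main obstacle. For a non-minimal hypersurface the refined Kato inequality carries an extra $|\nabla H|^2$ contribution, and here $\nabla H=-B(\mathbf{T}^\top)$ is not small. I would try to absorb that contribution into the drift term of $\mathcal{L}$, using the same algebra Xin uses for $\lambda=0$. If this fails, I would settle for the weaker constant that still matches $\frac{2(n-1)}{n^2\kappa}$ after optimising the exponent in Step 3.

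\emph{Step 3: weighted integration.} Set $f=|B|$, multiply the resulting inequality for $\mathcal{L}f$ by $f^{\alpha}\varphi^2e^{\langle \mathbf{T},X\rangle}$ with $\alpha$ tied to $n$, and integrate by parts. The operator $\mathcal{L}$ is self-adjoint for the measure $e^{\langle \mathbf{T},X\rangle}d\mu$, so no boundary terms appear. The hypothesis $\int_\Sigma|B|^ne^{\langle \mathbf{T},X\rangle}<\infty$ makes the cut-off error terms vanish as $\varphi\uparrow1$. This should produce a bound of the form
\[
c_n\int_\Sigma|\nabla(f^{\beta}\varphi)|^2\leqslant\int_\Sigma f^2(f^{\beta}\varphi)^2+|\lambda|\int_\Sigma f\,(f^{\beta}\varphi)^2+\text{cut-off terms},
\]
with $c_n=\frac{2(n-1)}{n^2}$ after the best choice of $\beta$. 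I would then pass from the weighted to the unweighted form in the same way as in Xin's proof.

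\emph{Step 4: Sobolev and H\"older.} By H\"older, $\int f^2g^2\leqslant\|f\|_{n}^2\|g\|_{2n/(n-2)}^2$ and $\int f g^2\leqslant\|f\|_{n/2}\|g\|_{2n/(n-2)}^2$. Lemma \ref{lem:sobolev} bounds $\|g\|_{2n/(n-2)}^2$ by $\kappa\int|\nabla g|^2$, where the $H$-term in Lemma \ref{lem:sobolev} is absorbed by the same $\|B\|_n$ smallness. Combining these gives
\[
\Bigl(\tfrac{2(n-1)}{n^2\kappa}-\|B\|_n^2-|\lambda|\,\|B\|_{n/2}\Bigr)\|f^{\beta}\varphi\|_{2n/(n-2)}^2\leqslant o(1).
\]
The first hypothesis makes the bracket positive, so $f^{\beta}\equiv0$, hence $B\equiv0$ and $\Sigma$ is a hyperplane.
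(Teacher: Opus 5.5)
Your Step 4 does not reach the stated threshold. You propose to absorb the term $\frac12\int_\Sigma H^2 g^2$ of the Michael--Simon inequality using the smallness of $\|B\|_n$. Via $H^2\leqslant n|B|^2$ and H\"older this costs $\frac n2\|B\|_n^2\|g\|_{2n/(n-2)}^2$. After multiplying through by $c_n=\frac{2(n-1)}{n^2}$, your bracket becomes $\frac{2(n-1)}{n^2\kappa}-\frac{2n-1}{n}\|B\|_n^2-|\lambda|\,\|B\|_{n/2}$, and the hypothesis does \emph{not} make this positive. Bounding $|H|\leqslant 1+|\lambda|$ instead leaves an uncontrolled zeroth-order term $\int g^2$. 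A symptom of the problem is that your plan never uses $\lambda H\leqslant 0$. It cannot enter in the Simons identity, whose only new term $\lambda\operatorname{tr}(B^3)$ you already handle by $|\operatorname{tr}(B^3)|\leqslant|B|^3$ and H\"older against $\|B\|_{n/2}$, and it does not enter in the Kato step either. Relatedly, your Step 3 inequality necessarily carries the weight $\rho=e^{\langle\mathbf{T},X\rangle}$, since you integrated against $\rho\,d\mu$, whereas the Sobolev inequality is unweighted. You leave the bridge between them unspecified.

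The missing idea is that this bridge is exactly what removes the $H^2$ term. Apply the Sobolev inequality to $g=\rho^{1/2}u$ with $u=|B|^{n/2}\eta$. Since $\nabla\rho^{1/2}=\frac12\rho^{1/2}\mathbf{V}$, integration by parts gives $\int|\nabla g|^2=\int|\nabla u|^2\rho-\frac12\int u^2\Delta\rho+\frac14\int u^2|\mathbf{V}|^2\rho$. Using $\Delta\rho=\rho\bigl(|\mathbf{V}|^2+H(H-\lambda)\bigr)$, one gets
\[
\int_\Sigma|\nabla g|^2+\frac12\int_\Sigma H^2g^2=\int_\Sigma|\nabla u|^2\rho+\frac14\int_\Sigma u^2\rho\,\bigl(2\lambda H-|\mathbf{V}|^2\bigr).
\]
The $H^2$ term cancels identically, and the remainder is nonpositive precisely because $\lambda H\leqslant0$; this is where the hypothesis is used. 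From there, $\int|\nabla u|^2\rho\leqslant\frac{n^2}{2}\int|B|^{n-2}|\nabla|B||^2\eta^2\rho+2\int|B|^n|\nabla\eta|^2\rho$. Testing the Simons identity against $\eta^2|B|^{n-2}\rho$ with only the plain Kato inequality $|\nabla B|\geqslant|\nabla|B||$ gives $(n-1-\varepsilon)\int|B|^{n-2}|\nabla|B||^2\eta^2\rho\leqslant\int|B|^{n+2}\eta^2\rho+|\lambda|\int|B|^{n+1}\eta^2\rho+\varepsilon^{-1}\int|B|^n|\nabla\eta|^2\rho$. This already yields the constant $\frac{2(n-1)}{n^2}$, so the refined Kato inequality you flagged as the main obstacle is not needed. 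Your H\"older step then closes the argument with exactly the stated threshold.
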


\begin{remark}
	In this theorem, we only consider the rigidity theorem for \( \lambda \)-translating soliton with \( \lambda H\leqslant 0 \). 
\end{remark}

The subsequent sections are organized as follows: in Section 2, we present fundamental materials for the $\lambda$-translating solitons; in Section 3, we provide the lower bound for the area of the extrinsic balls; in Section 4, we prove two non-existence results for the graphical $\lambda$-translating solitons; in the last section, we will derive the rigidity theorems.

\vspace{.1in}

\textbf{Conflict of Interest} The authors have no conflict of interest to declare.

\vspace{.1in}
\textbf{Data availability} The authors declare no datasets were generated or analysed during the current study.

\vspace{.2in}

\section{Preliminaries}

\vspace{.1in}

Let $\nabla$ and $\overline{\nabla}$ be the Levi-Civita connections on $\Sigma^{n}$ and $\mathbb{R}^{n+1}$, respectively. 
Then we define the second fundamental form $B$ by
\[
B(V,W) = (\overline{\nabla}_V W)^{\bot} = \overline{\nabla}_V W - \nabla_V W
\]
for any tangent vector fields $V, W$ in $\Sigma^{n}$, where $(\cdots)^{\bot}$ stands for the orthogonal projection into the normal bundle $N\Sigma^{n}$. 
Similarly, $(\cdots)^{\top}$ stands for the tangential projection. 
The mean curvature vector $\mathbf{H}$ of $\Sigma^n$ is given by
\[
\mathbf{H} = \operatorname{trace}(B) = \sum_{i=1}^n B(e_i,e_i) \in \Gamma(N\Sigma^{n}),
\]
where $\{e_i\}$ is a local orthonormal frame field of $\Sigma^{n}$. 
In what follows we use $\nabla$ for natural connections on various bundles for notational simplicity if there is no ambiguity from the context. 
For $\nu \in \Gamma(N\Sigma^{n})$ the shape operator $A^\nu \colon T\Sigma^{n} \to T\Sigma^{n}$, defined by
\[
A^\nu(V) = -(\overline{\nabla}_V \nu)^{\top}=-\overline{\nabla}_V \nu,
\]
satisfies
\[
\langle B_{VW}, \nu \rangle = \langle A^\nu(V), W \rangle.
\]
Set
\begin{equation*}
	B_{e_i e_j}=h_{ij}\nu,
\end{equation*}
then 
\begin{equation*}
	H=g^{ij}h_{ij},
\end{equation*}
and
\begin{equation*}
	|B|^2=g^{ij}g^{kl}h_{ik}h_{jl}.
\end{equation*}

For the sake of convenience, we sometimes use $\mathbf{V} = \mathbf{T}^{\top}$ to denote the tangential part of $\mathbf{T}$.
\begin{lemma}\label{lem:rho}
	Let $\Sigma^n \subset \mathbb{R}^{n+1}$ be a $\lambda$-translating soliton and denote $\rho = e^{\langle \mathbf{T},X \rangle}$, where $X$ is position vector in ${\mathbb R}^{n+1}$. Then, we have
	\begin{equation}\label{eq:grad-rho}
		\nabla\rho = \rho \mathbf{V},
	\end{equation}
	\begin{equation}\label{eq:grad-rho-half}
		\nabla\rho^{\frac{1}{2}} = \frac{1}{2}\rho^{\frac{1}{2}}\mathbf{V}
	\end{equation}
    and
	\begin{equation}\label{eq:laplace-rho}
		\Delta\rho = \rho\big(1 + \lambda(H - \lambda)\big).
	\end{equation}
\end{lemma}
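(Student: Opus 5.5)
Since $\rho = e^{\langle \mathbf{T},X\rangle}$ is the restriction to $\Sigma$ of an ambient function, we compute its gradient and Laplacian on $\Sigma$ from the ambient derivatives. We then simplify using the soliton equation \eqref{eq-lambdatranslatingsoliton}.

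\emph{Gradient.} For any tangent vector $e_i$ we have $e_i\langle \mathbf{T},X\rangle=\langle \mathbf{T},e_i\rangle$, because $\overline{\nabla}_{e_i}X=e_i$ and $\mathbf{T}$ is constant. Hence
\[
\nabla\langle \mathbf{T},X\rangle=\mathbf{T}^{\top}=\mathbf{V},
\]
and the chain rule gives $\nabla\rho=\rho\mathbf{V}$, which is \eqref{eq:grad-rho}. Applying the chain rule to $\rho^{1/2}=e^{\frac12\langle \mathbf{T},X\rangle}$ in the same way gives \eqref{eq:grad-rho-half}.

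\emph{Laplacian.} We write $\Delta\rho=\operatorname{div}_\Sigma(\rho\mathbf{V})=\rho|\mathbf{V}|^2+\rho\operatorname{div}_\Sigma\mathbf{V}$. To compute $\operatorname{div}_\Sigma\mathbf{V}$, decompose $\mathbf{V}=\mathbf{T}-\langle \mathbf{T},\nu\rangle\nu$. Since $\mathbf{T}$ is parallel in $\mathbb{R}^{n+1}$, differentiating gives
\[
\overline{\nabla}_{e_i}\mathbf{V}=-e_i(\langle \mathbf{T},\nu\rangle)\,\nu-\langle \mathbf{T},\nu\rangle\,\overline{\nabla}_{e_i}\nu .
\]
Taking the tangential component along $e_i$ kills the first term. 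Using $-\overline{\nabla}_{e_i}\nu=A^\nu(e_i)$, the second term contributes $\langle \mathbf{T},\nu\rangle\langle A^\nu e_i,e_i\rangle$. Summing over $i$ gives $\operatorname{div}_\Sigma\mathbf{V}=\langle \mathbf{T},\nu\rangle H$. Equivalently, this is the standard identity $\Delta X=\mathbf{H}=H\nu$ paired with $\mathbf{T}$.

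\emph{Applying the soliton equation.} Substituting $\langle \mathbf{T},\nu\rangle=H-\lambda$ and $|\mathbf{V}|^2=1-\langle \mathbf{T},\nu\rangle^2$ (using $|\mathbf{T}|=1$), we get
\[
\Delta\rho=\rho\bigl(1-(H-\lambda)^2+(H-\lambda)H\bigr)=\rho\bigl(1+\lambda(H-\lambda)\bigr),
\]
which is \eqref{eq:laplace-rho}.

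There is no real obstacle here; the only point to watch is the sign convention. The normal $\nu$ is the inner normal and $h_{ij}=\langle B_{e_ie_j},\nu\rangle$, so $\mathbf{H}=H\nu$ and $\operatorname{div}_\Sigma\mathbf{V}=+H\langle \mathbf{T},\nu\rangle$, consistent with $H=\langle \mathbf{T},\nu\rangle+\lambda$.
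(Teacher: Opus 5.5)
Your proof is correct and follows essentially the same route as the paper. The gradient comes from $\nabla\langle\mathbf{T},X\rangle=\mathbf{T}^{\top}=\mathbf{V}$, and the Laplacian from $\Delta\rho=\rho|\mathbf{V}|^2+\rho\langle\mathbf{T},\mathbf{H}\rangle$ followed by $|\mathbf{V}|^2=1-(H-\lambda)^2$. The only cosmetic difference is that you compute $\operatorname{div}_\Sigma\mathbf{V}$ by differentiating $\mathbf{T}-\langle\mathbf{T},\nu\rangle\nu$, whereas the paper differentiates $\langle\mathbf{T},e_i\rangle$ in a normal frame to obtain $\langle\mathbf{T},\mathbf{H}\rangle$ directly.
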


\begin{proof}
	From \eqref{eq-lambdatranslatingsoliton}, we have
	\[
	1 = |\mathbf{T}|^2 = |\mathbf{V}|^2 + (H - \lambda)^2,
	\]
	\[
	|\mathbf{V}|^2 + H(H - \lambda) = 1 - (H - \lambda)^2 + H(H - \lambda) = 1 + \lambda(H - \lambda),
	\]
	where $\mathbf{V}$ is the tangential part of $\mathbf{T}$. Then, we have
	\[
	\nabla\rho = \rho\nabla\langle \mathbf{T},X \rangle = \rho\langle \mathbf{T},e_i \rangle e_i = \rho \mathbf{T}^\top = \rho \mathbf{V},
	\]
	\[
	\nabla\rho^{\frac{1}{2}} = \frac{1}{2}\rho^{\frac{1}{2}}\mathbf{V},
	\]
	\[
	\begin{aligned}
		\Delta\rho &= \nabla_{e_i}\nabla_{e_i}\rho = \nabla_{e_i}\big(\rho\langle \mathbf{T},e_i \rangle\big) = \rho\langle \mathbf{T},e_i \rangle\langle \mathbf{T},e_i \rangle + \rho\langle \mathbf{T},\mathbf{H} \rangle \\
		&= \rho\big(|\mathbf{V}|^2 + H(H - \lambda)\big) = \rho\big(1 + \lambda(H - \lambda)\big).
	\end{aligned}
	\]
\end{proof}

Next we will compute the general formulas:

\begin{proposition}
	Let \( \Sigma^{n} \) be an oriented immersed hypersurface in \( \mathbb{R}^{n+1} \) with second fundamental form \( B \).  
	Then for any fixed vector \( \mathbf{W} \in \mathbb{R}^{n+1} \),
	\begin{equation*}
		\begin{split}
			\nabla_{e_i} \langle \mathbf{W}, \nu \rangle = -\langle \mathbf{W}, e_k\rangle h_{ik},
		\end{split}
	\end{equation*}
	and
	\begin{equation*}
		\begin{split}
			\nabla_{e_i}\nabla_{e_j} \langle \mathbf{W}, \nu \rangle =  -\langle \mathbf{W}, \nabla h_{ij} \rangle -\langle \mathbf{W}, \nu \rangle h_{ik}h_{kj},
		\end{split}
	\end{equation*}
	where  \( \nu \) is the unit inner normal vector to \( \Sigma^{n} \).
\end{proposition}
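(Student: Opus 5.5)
We prove both formulas by direct differentiation in a local orthonormal frame $\{e_i\}$ that is normal at the point under consideration, i.e.\ $\nabla_{e_i}e_j=0$ at that point. Then covariant derivatives of functions agree with ordinary directional derivatives at the point. We use two structural facts. The first is the Weingarten relation $\overline{\nabla}_{e_i}\nu=-A^\nu(e_i)=-h_{ik}e_k$; there is no normal component, since $|\nu|=1$. The second is the Gauss formula $\overline{\nabla}_{e_i}e_k=\nabla_{e_i}e_k+h_{ik}\nu$. Since $\mathbf{W}$ is constant, $\overline{\nabla}\mathbf{W}=0$.

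For the first identity, we compute
\[
\nabla_{e_i}\langle \mathbf{W},\nu\rangle=\langle \mathbf{W},\overline{\nabla}_{e_i}\nu\rangle=-h_{ik}\langle \mathbf{W},e_k\rangle.
\]
This holds for any frame, not only a normal one.

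For the second identity, we differentiate the first once more and work at the center of the normal frame, where the Hessian equals $e_i(e_j\langle\mathbf{W},\nu\rangle)$. This gives
\[
\nabla_{e_i}\nabla_{e_j}\langle \mathbf{W},\nu\rangle=-(\nabla_{e_i}h_{jk})\langle \mathbf{W},e_k\rangle-h_{jk}\langle \mathbf{W},\overline{\nabla}_{e_i}e_k\rangle .
\]
At the point, $\overline{\nabla}_{e_i}e_k=h_{ik}\nu$, so the last term equals $-h_{jk}h_{ik}\langle\mathbf{W},\nu\rangle$, which is the quadratic term in the statement. In the first term, the Codazzi equation in flat ambient space gives $\nabla_{e_i}h_{jk}=\nabla_{e_k}h_{ij}$. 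Hence the first term becomes
\[
-\langle \mathbf{W},e_k\rangle\nabla_{e_k}h_{ij}=-\langle \mathbf{W},\nabla h_{ij}\rangle,
\]
where $\nabla h_{ij}=(\nabla_{e_k}h_{ij})e_k$ is viewed as a tangent vector in $\mathbb{R}^{n+1}$.

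Both sides are tensorial, so the identity at the center of a normal frame gives it everywhere. The only real step is the use of the Codazzi symmetry $h_{ij,k}=h_{ik,j}$, which holds because $\mathbb{R}^{n+1}$ is flat. The rest is bookkeeping. In particular, the terms $\nabla_{e_i}e_k$ vanish at the point by the choice of frame, and the sign conventions must be the inner normal with $A^\nu=-\overline{\nabla}\nu$, consistent with the paper.
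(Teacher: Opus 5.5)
Your proof is correct and follows essentially the same route as the paper. Both arguments use a normal frame at the point, the Weingarten relation $\overline{\nabla}_{e_i}\nu=-h_{ik}e_k$ for the first identity, and for the second the Gauss formula $\overline{\nabla}_{e_i}e_k=h_{ik}\nu$ at the point plus the Codazzi symmetry. The only cosmetic difference is that the paper differentiates the vector field $A^\nu(e_j)=h_{jk}e_k$ and splits $\overline{\nabla}_{e_i}A^\nu(e_j)$ into tangential and normal parts, while you apply the product rule to the scalar $-h_{jk}\langle \mathbf{W},e_k\rangle$; this is the same computation.
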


\begin{proof}
	Choose a local orthonormal frame field \(\{e_i\}\) along $\Sigma$ with \(\nabla_{e_j} e_i = 0\) at the considered point, so that at the point we have
    \begin{equation*}
        \overline{\nabla}_{e_i}e_j=h_{ij}\nu, \ \ \overline{\nabla}_{e_i} \nu=-h_{ij}e_j. 
    \end{equation*}
    Then
	\begin{equation*}
		\begin{split}
			\nabla_{e_i} \langle \mathbf{W}, \nu \rangle &= \langle \mathbf{W}, \overline{\nabla}_{e_i} \nu \rangle = -\langle \mathbf{W}, e_k\rangle h_{ik},
		\end{split}
	\end{equation*}
	and
	\begin{equation*}
		\begin{split}
			\nabla_{e_i}\nabla_{e_j} \langle \mathbf{W}, \nu \rangle
			&= \nabla_{e_i} \langle \mathbf{W}, \overline{\nabla}_{e_j} \nu \rangle \\
			&= \langle \mathbf{W}, \overline{\nabla}_{e_i} \overline{\nabla}_{e_j} \nu \rangle \\
			&= -\langle \mathbf{W}, \overline{\nabla}_{e_i} A^\nu (e_j) \rangle \\
			&= -\langle \mathbf{W}, \nabla_{e_i} A^\nu (e_j)+B(e_i,A^\nu (e_j)) \rangle \\
			&= -\langle \mathbf{W}, \nabla_{e_i} A^\nu (e_j)+B(e_i,h_{jk}e_k) \rangle \\
			&= -\langle \mathbf{W}, \nabla_{e_i} A^\nu (e_j)+h_{ik}h_{kj}\nu \rangle.
		\end{split}
	\end{equation*}
On the other hand, we have from $A^\nu (e_j)=h_{jk}e_k$ that
\begin{equation*}
		\begin{split}
			\nabla_{e_i} A^\nu (e_j)
			&= \nabla_{e_i}(h_{jk}e_k)
            =(\nabla_{e_i}h_{jk})e_k 
            =(\nabla_{e_k}h_{ij})e_k = \nabla h_{ij}.
		\end{split}
	\end{equation*}
    Here, we used the Codazzi equation in the third equality.	Therefore, we have
	\begin{equation*}
		\begin{split}
			\nabla_{e_i}\nabla_{e_j} \langle \mathbf{W}, \nu \rangle 
			&= -\langle \mathbf{W}, \nabla h_{ij} \rangle -\langle \mathbf{W}, \nu \rangle h_{ik}h_{kj}.
		\end{split}
	\end{equation*}
\end{proof}

\vspace{.1in}

As a consequence, we have

\begin{corollary}
	Let \( \Sigma^{n} \) be an oriented $\lambda$-translating soliton in \( \mathbb{R}^{n+1} \) with second fundamental form \( B \).  
	Then
	\begin{equation}\label{eq:second-deriv-H}
		\begin{split}
			\nabla_i \nabla_j H =& - \langle {\bf T}, \nu \rangle h_{ik}h_{jk} - \langle {\bf T}, \nabla h_{ij}\rangle \\
			=&-(H-\lambda)h_{ik}h_{jk} - \langle {\bf T}, e_k \rangle \nabla_k h_{ij},
		\end{split}
	\end{equation}
	and
	\begin{equation}\label{eq-Deltaanu}
		\Delta \langle {\bf T}, \nu \rangle + |B|^2 \langle {\bf T}, \nu \rangle + \langle {\bf T}, \nabla H \rangle = 0.
	\end{equation}
\end{corollary}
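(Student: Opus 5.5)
Both identities will come from the preceding Proposition applied with $\mathbf{W}=\mathbf{T}$, combined with the soliton equation \eqref{eq-lambdatranslatingsoliton} in the form $H=\langle \mathbf{T},\nu\rangle+\lambda$. Since $\lambda$ is constant, $\nabla H=\nabla\langle \mathbf{T},\nu\rangle$ and $\nabla_i\nabla_j H=\nabla_i\nabla_j\langle \mathbf{T},\nu\rangle$.

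For \eqref{eq:second-deriv-H}, the Proposition directly gives
\[
\nabla_i\nabla_j H=-\langle \mathbf{T},\nabla h_{ij}\rangle-\langle \mathbf{T},\nu\rangle h_{ik}h_{kj}.
\]
This is the first line, after noting $h_{kj}=h_{jk}$. For the second line, I will substitute $\langle \mathbf{T},\nu\rangle=H-\lambda$. Then I will expand the tangent vector $\nabla h_{ij}=(\nabla_k h_{ij})e_k$, which gives $\langle \mathbf{T},\nabla h_{ij}\rangle=\langle \mathbf{T},e_k\rangle\nabla_k h_{ij}$.

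For \eqref{eq-Deltaanu}, I will trace the Proposition's second formula over $i=j$. This gives
\[
\Delta\langle \mathbf{T},\nu\rangle=-\langle \mathbf{T},\nabla h_{ii}\rangle-\langle \mathbf{T},\nu\rangle h_{ik}h_{ki}.
\]
Here $h_{ik}h_{ki}=|B|^2$, and in the orthonormal frame $\sum_i\nabla h_{ii}=\nabla H$, because covariant differentiation commutes with tracing. Rearranging gives \eqref{eq-Deltaanu}.

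None of these steps is a real obstacle. The only point needing care is sign and convention consistency: $\nu$ is the inner normal, $h_{ij}=\langle B(e_i,e_j),\nu\rangle$, and $\nabla h_{ij}$ is read as the tangent vector $(\nabla_k h_{ij})e_k$. That reading is the one obtained from the Codazzi step in the Proposition's proof.
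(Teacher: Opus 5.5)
Your proposal is correct and matches the paper's route. The paper states this corollary as an immediate consequence of the preceding Proposition with $\mathbf{W}=\mathbf{T}$, using $H=\langle \mathbf{T},\nu\rangle+\lambda$ with $\lambda$ constant, and the second identity follows by tracing over $i=j$ with $h_{ik}h_{ki}=|B|^2$ and $\sum_i\nabla h_{ii}=\nabla H$, exactly as you do.
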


According to Simons' identity and \eqref{eq:second-deriv-H}, for $\lambda$-translating solitons in $\mathbb{R}^{n+1}$ we can derive the Laplacian of the squared norm of the second fundamental form as follows:
\[
\begin{aligned}
	\Delta h_{ij} &= \nabla_i \nabla_j H + H h_{ik} h_{kj} - |B|^2 h_{ij} \\
	&= \langle \lambda - H, h_{ik} \rangle h_{jk} - \langle T, e_k \rangle \nabla_k h_{ij} + H h_{ik} h_{kj} - |B|^2 h_{ij} \\
	&= -\langle T, e_k \rangle \nabla_k h_{ij} - |B|^2 h_{ij} + \lambda h_{ik} h_{jk}.
\end{aligned}
\]
Next, we introduce a linear operator on $\Sigma^n$
\[
\mathcal{L} = \Delta + \langle \mathbf{T},\nabla(\cdot) \rangle = e^{-\langle \mathbf{T},X \rangle}\operatorname{div}\big(e^{\langle \mathbf{T},X \rangle}\nabla(\cdot)\big),
\]
in a similar manner of the drift-Laplacian on the self-shrinkers by Colding and Minicozzi in \cite{colding2012generic}. It can be shown that $\mathcal{L}$ is self-adjoint with respect to the measure $e^{\langle \mathbf{T},X \rangle}d\mu$.

As a consequence, we have
\begin{equation}\label{eq:L-A-squared}
	\mathcal{L}|B|^2 = \Delta|B|^2 + \langle \mathbf{T}, \nabla|B|^2 \rangle = 2|\nabla B|^2 - 2|B|^4 + 2\lambda \operatorname{tr}(B^3),
\end{equation}
and
\[
\mathcal{L}H = \Delta H + \langle \mathbf{T}, \nabla H \rangle = (\lambda - H)|B|^2.
\]

\vspace{.1in}

Recall that we have
\begin{equation}\label{e-T}
    {\bf T}={\bf V}+(H-\lambda)\nu=V^je_j+(H-\lambda)\nu.
\end{equation}
Take covariant derivative on $V$ and equating tangential and normal components
we find that
\begin{equation}\label{eq:main}
	\begin{cases}
		\nabla_i V^j = (H - \lambda) h_{ik} g^{kj} \\
		\nabla_i H + h_{ij} V^j = 0.
	\end{cases}
\end{equation}

\vspace{.1in}

Next, we consider the Gauss map of $\lambda$-translating solitons $\gamma:\Sigma\to\mathbb{S}^n$.
The pullback under $\gamma$ of the tangent bundle $T\mathbb{S}^n$ to a bundle over $\Sigma$ is denoted by $\gamma^{-1}T\mathbb{S}^n$.
Denote by $\Gamma$ and $\tilde{\Gamma}$ the Christoffel symbols on $\Sigma$ and $\mathbb{S}^n$, and by $\{x^i\}$ and $\{y^\alpha\}$ the local coordinates on $\Sigma$ and $\mathbb{S}^n$ respectively.
Similar to Bao-Shi's analysis in \cite{bao2014gauss}, we have the following lemma:

\begin{lemma}\label{lem:quasi-harmonic}
	The Gauss map \( \gamma \) of a \( \lambda \)-translating soliton \( \Sigma \) forms a quasi-harmonic map, i.e.
	\[
	\tau(\gamma) = -\nabla {\bf H} = - d\gamma({\bf V})
	\]
	where 
	\[
	\tau^\alpha(\gamma) = \Delta \gamma^\alpha + g^{ij}\tilde{\Gamma}^\alpha_{\beta\sigma}\frac{\partial \gamma^\beta}{\partial x^i}\frac{\partial \gamma^\sigma}{\partial x^j}.
	\]
\end{lemma}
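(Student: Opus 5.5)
We prove Lemma \ref{lem:quasi-harmonic} by the standard computation for the Gauss map of a hypersurface. The plan is to compute the tension field of $\gamma=\nu$ extrinsically, viewing $\mathbb{S}^n\subset\mathbb{R}^{n+1}$, and then substitute the soliton equation. For a map into the round sphere, the tension field is the tangential part of the Euclidean Laplacian:
\[
\tau(\gamma)=\Delta\nu+|d\nu|^2\nu ,
\]
where $\Delta\nu$ is the componentwise Laplace--Beltrami operator of the $\mathbb{R}^{n+1}$-valued function $\nu$. This follows from the local-coordinate formula in the statement, because the second fundamental form of $\mathbb{S}^n\subset\mathbb{R}^{n+1}$ is $-\langle\cdot,\cdot\rangle y$. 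We also identify $T_{\nu(x)}\mathbb{S}^n$ with $T_x\Sigma$, since both are the orthogonal complement of $\nu$.

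Next we compute $\Delta\nu$ in a frame normal at the point. Applying the Proposition above to each coordinate vector $\mathbf{W}=E_a$ gives
\[
\nabla_{e_i}\nu=-h_{ik}e_k
\quad\text{and}\quad
\nabla_{e_i}\nabla_{e_i}\nu=-\nabla h_{ii}-h_{ik}h_{ki}\,\nu .
\]
Here $\nabla h_{ii}=(\nabla_{e_k}h_{ii})e_k$, obtained via Codazzi. Summing over $i$,
\[
\Delta\nu=-\nabla H-|B|^2\nu .
\]
Since $|d\nu|^2=|B|^2$, we get
\[
\tau(\gamma)=-\nabla H=-\nabla\mathbf{H},
\]
where $\nabla\mathbf{H}$ is understood as the gradient of $H$, pushed into $T\mathbb{S}^n$ by the identification above.

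Finally we use the soliton structure. The second line of \eqref{eq:main} gives $\nabla_i H=-h_{ij}V^j$. On the other hand, $d\gamma(\mathbf{V})=\overline{\nabla}_{\mathbf{V}}\nu=-h_{jk}V^je_k$, so
\[
d\gamma(\mathbf{V})=\nabla H .
\]
Hence $\tau(\gamma)=-\nabla H=-d\gamma(\mathbf{V})$, which is the quasi-harmonic equation claimed.

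The only delicate point is bookkeeping of signs and conventions: the inner normal, the sign of $h_{ij}$ in $\overline{\nabla}_{e_i}\nu=-h_{ij}e_j$, and the identification of $\tau$ with the tangential projection. The computation itself is routine once the Proposition and \eqref{eq:main} are in hand.
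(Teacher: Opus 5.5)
Your proof is correct. The paper does not actually prove this lemma: it only says the lemma holds ``similar to Bao--Shi's analysis''. The standard route behind that citation goes through the Ruh--Vilms formula. That formula identifies the tension field of the Gauss map (into a Grassmannian, in arbitrary codimension) with the normal covariant derivative of the mean curvature vector. One then differentiates $\mathbf{H}=\mathbf{T}^{\perp}+\lambda\nu$. The $\lambda$-term drops out because $\nu$ is parallel in the normal bundle of a hypersurface, leaving $\nabla^{\perp}_{e_i}\mathbf{H}=-B(e_i,\mathbf{V})$, which corresponds to $d\gamma(\mathbf{V})$ under the usual identification.

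You instead derive the hypersurface case of Ruh--Vilms by hand. You combine $\tau(\gamma)=\Delta\nu+|d\nu|^2\nu$ with the paper's Proposition, applied with $\mathbf{W}$ running over a basis, and then close with the normal component of \eqref{eq:main}.

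Your version has a concrete advantage: every identification is literal. The vectors $\tau(\gamma)$, $\nabla H$ and $d\gamma(\mathbf{V})=\overline{\nabla}_{\mathbf{V}}\nu$ all lie in $\nu^{\perp}=T_x\Sigma\subset\mathbb{R}^{n+1}$. So the signs, including your reading of $-\nabla\mathbf{H}$ as $-\nabla H$, are verified rather than fixed by convention. It is also transparent that only the constancy of $H-\langle\mathbf{T},\nu\rangle$ is used, so $\lambda$ plays no role in the identity.

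The Grassmannian route is what one would need in higher codimension. For the hypersurface statement here, your computation is a complete, self-contained substitute for the omitted proof.
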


\begin{lemma}\label{lem:bochner}
	For the quasi-harmonic equation of a Gauss map $\gamma$ of a $\lambda$-translating soliton, we have the following Bochner formula:
	\begin{equation*}\label{eq:bochner}
		\Delta|\nabla \gamma|^2 = 2|\nabla d\gamma|^2 - 2|\nabla \gamma|^4 - \langle {\bf V}, \nabla|\nabla\gamma|^2 \rangle + 2\lambda B(\nabla \gamma, \nabla \gamma).
	\end{equation*}
\end{lemma}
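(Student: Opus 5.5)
We prove the Bochner formula of Lemma \ref{lem:bochner} by a direct computation in normal coordinates. We use Lemma \ref{lem:quasi-harmonic} and the Gauss equation on $\Sigma$.

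\textbf{Setup.} Regard $\gamma=\nu$ as a map into $\mathbb{S}^n\subset\mathbb{R}^{n+1}$. Then $d\gamma(e_i)=\overline{\nabla}_{e_i}\nu=-h_{ik}e_k$, so
\[
|\nabla\gamma|^2=|B|^2,\qquad \nabla d\gamma(e_i,e_j)=-(\nabla_i h_{jk})e_k ,
\]
where the second identity holds after projecting onto $T\mathbb{S}^n$. Hence $|\nabla d\gamma|^2=|\nabla B|^2$, and likewise $B(\nabla\gamma,\nabla\gamma)=\sum_{i}B(d\gamma(e_i),d\gamma(e_i))=h_{ik}h_{kl}h_{li}=\operatorname{tr}(B^3)$.

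\textbf{General Bochner formula.} For any map $\gamma:\Sigma\to\mathbb{S}^n$ we start from
\[
\tfrac12\Delta|d\gamma|^2=|\nabla d\gamma|^2+\langle d\gamma,\nabla\tau(\gamma)\rangle+\langle d\gamma(\mathrm{Ric}^\Sigma e_i),d\gamma(e_i)\rangle-\sum_{i,j}\langle R^{\mathbb{S}^n}(d\gamma e_i,d\gamma e_j)d\gamma e_j,d\gamma e_i\rangle .
\]
Each term is then evaluated separately.

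\textbf{Evaluating the terms.} First, by Lemma \ref{lem:quasi-harmonic}, $\tau(\gamma)=-d\gamma(\mathbf V)$. Differentiating gives
\[
\langle d\gamma,\nabla\tau\rangle=-\langle d\gamma(e_i),\nabla d\gamma(e_i,\mathbf V)\rangle-\langle d\gamma(e_i),d\gamma(\nabla_{e_i}\mathbf V)\rangle .
\]
Its first piece equals $-\frac12\langle\mathbf V,\nabla|\nabla\gamma|^2\rangle$ by symmetry of $\nabla d\gamma$ (Codazzi). For the second piece, \eqref{eq:main} gives $\nabla_i V^j=(H-\lambda)h_{ij}$, so it equals $-(H-\lambda)\operatorname{tr}(B^3)$.

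Second, the Gauss equation gives $\mathrm{Ric}_{ij}=Hh_{ij}-h_{ik}h_{kj}$, so the Ricci term is $H\operatorname{tr}(B^3)-\operatorname{tr}(B^4)$.

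Third, since $\mathbb{S}^n$ has constant curvature $1$, the curvature term equals $-\big(|B|^4-\operatorname{tr}(B^4)\big)$.

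\textbf{Summing.} The $\operatorname{tr}(B^4)$ terms cancel, and $H\operatorname{tr}B^3-(H-\lambda)\operatorname{tr}B^3=\lambda\operatorname{tr}B^3$. Multiplying by $2$ yields
\[
\Delta|\nabla\gamma|^2=2|\nabla d\gamma|^2-2|\nabla\gamma|^4-\langle\mathbf V,\nabla|\nabla\gamma|^2\rangle+2\lambda B(\nabla\gamma,\nabla\gamma),
\]
which is the claimed formula. This is also consistent with \eqref{eq:L-A-squared}, and rewriting $\mathcal L|B|^2$ from that identity gives a quick cross-check: note that $\langle \mathbf T,\nabla\cdot\rangle=\langle\mathbf V,\nabla\cdot\rangle$.

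\textbf{Main obstacle.} The delicate point is bookkeeping signs: the inner normal convention, the sign of $\tau=-d\gamma(\mathbf V)$, and $\nabla_iV^j=(H-\lambda)h_{ij}$. Together these must make the $(H-\lambda)$-term combine with the Ricci term to leave exactly $\lambda\operatorname{tr}B^3$. To control this, I would first verify the computation against \eqref{eq:L-A-squared} and only then present it in the harmonic-map language.
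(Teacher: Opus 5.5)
Your proposal is correct and follows the same route as the paper. Both arguments start from the general Bochner formula for maps into $\mathbb{S}^n$, evaluate the tension term via $\tau(\gamma)=-d\gamma(\mathbf V)$ and $\nabla_iV^j=(H-\lambda)h_{ij}$, the Ricci term via the Gauss equation, and the target term via constant curvature $1$; the $\operatorname{tr}(B^4)$ terms then cancel and $H-(H-\lambda)=\lambda$ survives. Your explicit identification $B(\nabla\gamma,\nabla\gamma)=\operatorname{tr}(B^3)$ and the consistency check against \eqref{eq:L-A-squared} are useful additions, not a different argument.
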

\begin{proof}
Denote $R_{ijkl}$ and $K_{\alpha \beta \iota \sigma}$ be the curvature operator on $\Sigma$ and ${\mathbb S}^n$ both with induced metric respectively. In general, we have the Bochner formula:
\[
\Delta|\nabla \gamma|^2 = 2|\nabla d\gamma|^2 + 2\langle d\tau(\gamma), d\gamma \rangle + 2R_{ij}\langle \gamma_i, \gamma_j \rangle - 2K_{\alpha \beta \iota \sigma} \gamma_i^\alpha \gamma_j^\beta \gamma_i^\iota \gamma_j^\sigma,
\]
By Gauss equation for hypersurface, we have
\[
R_{ij} = H h_{ij} - h_{ik} h_{kj}.
\]
On the other hand, the curvature tensor on ${\mathbb S}^n$ is given by
\[
K_{\alpha \beta \iota \sigma} = \delta_{\alpha \iota} \delta_{\beta \sigma} - \delta_{\alpha \sigma} \delta_{\beta \iota}.
\]
Since $\gamma$ is Gauss map, Weingarten formula gives $\gamma_i = -h_{ij} e_j$. Together with \eqref{eq:main}, we compute
\[
\begin{aligned}
	\langle d\tau(\gamma), d\gamma \rangle &= \langle d\gamma, d(-d\gamma(\mathbf{V})) \rangle = -\langle d\gamma, d\gamma(\nabla \mathbf{V}) \rangle - \langle d\gamma, \nabla_\mathbf{V} d\gamma \rangle \\
	&= -(H - \lambda)B(\nabla \gamma, \nabla \gamma) - \frac{1}{2}\langle \mathbf{V}, \nabla|\nabla \gamma|^2 \rangle.
\end{aligned}
\]
Using Gauss equation, we have
\[
\begin{aligned}
	R_{ij}\langle \gamma_i, \gamma_j \rangle &= H B(\nabla \gamma, \nabla \gamma) - h_{ik} h_{kj} h_{ip} h_{jp} \\
	&= H B(\nabla \gamma, \nabla \gamma) - \sum_{i,j} \left( \sum_k h_{ik} h_{kj} \right)^2,
\end{aligned}
\]
and
\[
\begin{aligned}
	K_{\alpha\beta\iota\sigma} \gamma_i^\alpha \gamma_j^\beta \gamma_i^\iota \gamma_j^\sigma &= \delta_{\alpha\iota}\delta_{\beta\sigma} \gamma_i^\alpha \gamma_j^\beta \gamma_i^\iota \gamma_j^\sigma - \delta_{\alpha\sigma}\delta_{\beta\iota} \gamma_i^\alpha \gamma_j^\beta \gamma_i^\iota \gamma_j^\sigma \\
	&= |\nabla \gamma|^4 - \sum_{i,j} \left( \langle \gamma_i, \gamma_j \rangle \right)^2 \\
	&= |\nabla \gamma|^4 - \sum_{i,j} \left( \sum_k h_{ik} h_{kj} \right)^2.
\end{aligned}
\]
The assertion follows from the above equalities.
\end{proof}

\vspace{.2in}

\section{Volume Growth for \texorpdfstring{$\lambda$}--Translating Solitons}

\vspace{.1in}

In this section, we consider the volume growth for $\lambda$-translating solitons and show that every properly immersed $\lambda$-translating soliton with $\lambda>0$ and $\inf_{\Sigma}\langle \mathbf{T},\nu\rangle=H_{\inf}-\lambda>0$ has at least exponential volume growth.

Suppose $\Sigma^n \subset \mathbb{R}^{n+1}$ is a properly immersed $\lambda$-translating soliton. 
For any $x_0 \in \Sigma$, let $B_r(x_0)$ be the extrinsic ball in $\mathbb{R}^{n+1}$, and denote the volume and the weighted volume of $\Sigma \cap B_r(x_0)$ by
\[
V(r) = \operatorname{Vol}(\Sigma \cap B_r(x_0)) = \int_{\Sigma \cap B_r(x_0)} d\mu,
\]
and
\begin{equation}\label{e-weighted-area}
    \tilde{V}(r) = \int_{\Sigma \cap B_r(x_0)} e^{\langle \mathbf{T},X\rangle} d\mu.
\end{equation}

We will first show that the weighted volume has at least exponential growth.
For simplicity, we may assume $x_0 = 0$. 
By the co-area formula, we have
\[
\tilde{V}(r) = \int_0^r \int_{\partial B_r \cap \Sigma} e^{\langle \mathbf{T},X\rangle} \frac{1}{|\nabla_\Sigma |X| |} \, ds.
\]
Note that $\nabla_\Sigma |X|^2 = 2X^{\top} = 2|X| \nabla_\Sigma |X|$, so we get
\begin{equation}\label{eq-tildeV'}
	\tilde{V}'(r) = \int_{\partial B_r \cap \Sigma} e^{\langle \mathbf{T},X\rangle} \frac{|X|}{|X^{\top}|}.
\end{equation}
On the other hand, by \eqref{eq:laplace-rho}, we obtain that if $1+\lambda(H_{\inf}-\lambda)>0$, then
\begin{equation}\label{inq-tildeV}
	\begin{split}
		\tilde{V}(r) &= \int_{\Sigma \cap B_r} \frac{\Delta e^{\langle \mathbf{T},X\rangle}}{1+\lambda(H-\lambda)}\, d\mu \\
		&\leqslant \frac{1}{1+\lambda(H_{\inf}-\lambda)}\int_{\partial B_r \cap \Sigma} \left\langle \nabla_\Sigma e^{\langle \mathbf{T},X\rangle}, \frac{X^{\top}}{|X^{\top}|} \right\rangle \\
		&= \frac{1}{1+\lambda(H_{\inf}-\lambda)}\int_{\partial B_r \cap \Sigma} \left\langle \nabla_\Sigma \langle \mathbf{T},X\rangle, \frac{X^{\top}}{|X^{\top}|} \right\rangle e^{\langle \mathbf{T},X\rangle}.
	\end{split}
\end{equation}
Combining \eqref{eq-tildeV'} and \eqref{inq-tildeV}, we conclude that for any $r > 0$,
\begin{equation}\label{inq-tildeV'geqtildeV}
	\tilde{V}(r) \leqslant \frac{1}{1+\lambda(H_{\inf}-\lambda)}\tilde{V}'(r).
\end{equation}
This implies the quantity $\tilde{V}(r)e^{-\big(1+\lambda(H_{\inf}-\lambda)\big)r}$ is monotone non-decreasing. We summarize this result in the following proposition.

\begin{proposition}\label{inq-tildeVgeqe}
	Let $\Sigma^n \subset \mathbb{R}^{n+1}$ be a complete properly immersed $\lambda$-translating soliton with $\lambda>0$ and $1+\lambda(H_{\inf}-\lambda)>0$. Then for any $x \in \Sigma$, there exists a constant $C = \tilde{V}(1)e^{-\big(1+\lambda(H_{\inf}-\lambda)\big)} > 0$ such that
	\[
	\tilde{V}(r) = \int_{\Sigma \cap B_r(x)} e^{\langle \mathbf{T},X\rangle} d\mu \geqslant C e^{\big(1+\lambda(H_{\inf}-\lambda)\big)r}, \quad \text{for all } r \geqslant 1.
	\]
\end{proposition}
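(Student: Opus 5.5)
The plan is to turn the differential inequality \eqref{inq-tildeV'geqtildeV} into a rigorous statement valid for almost every $r$, and then integrate it as a logarithmic derivative. Set $a \triangleq 1+\lambda(H_{\inf}-\lambda)>0$.

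\textbf{Step 1: the pointwise bound $\tilde V(r)\leqslant a^{-1}\int_{\Sigma\cap B_r}\Delta\rho$.} By \eqref{eq:laplace-rho} we have $\Delta\rho=\rho\,(1+\lambda(H-\lambda))$. Since $\lambda>0$ and $H\geqslant H_{\inf}$, the factor satisfies $1+\lambda(H-\lambda)\geqslant a>0$. Because $\rho>0$, this gives
\[
\rho=\frac{\Delta\rho}{1+\lambda(H-\lambda)}\leqslant \frac{\Delta\rho}{a}
\]
pointwise, with $\Delta\rho>0$. Integrating over $\Sigma\cap B_r$ yields the claimed bound.

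\textbf{Step 2: apply the divergence theorem on good radii.} Since $\Sigma$ is properly immersed, $\Sigma\cap \overline{B_r}$ is compact. By Sard's theorem applied to $|X|$ restricted to $\Sigma$, almost every $r>0$ is a regular value. For such $r$, the set $\Sigma\cap B_r$ has smooth boundary $\partial B_r\cap\Sigma$, and its outward conormal is $X^\top/|X^\top|$, with $X^\top\neq0$ there. The divergence theorem then gives
\[
\int_{\Sigma\cap B_r}\Delta\rho=\int_{\partial B_r\cap\Sigma}\Big\langle \rho\mathbf V,\frac{X^\top}{|X^\top|}\Big\rangle .
\]
On the boundary we estimate the integrand by $\rho|\mathbf V|\leqslant \rho\leqslant \rho\,|X|/|X^\top|$, using $|\mathbf V|\leqslant|\mathbf T|=1$ and $|X^\top|\leqslant|X|$.

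\textbf{Step 3: identify the boundary term with $\tilde V'(r)$.} By the co-area formula, $\tilde V(r)=\int_0^r\int_{\partial B_s\cap\Sigma}\rho\,|X|/|X^\top|\,ds$. Hence $\tilde V$ is absolutely continuous and nondecreasing, and \eqref{eq-tildeV'} holds for a.e.\ $r$. Combining Steps 1 and 2 gives $a\tilde V(r)\leqslant \tilde V'(r)$ for a.e.\ $r$, which is \eqref{inq-tildeV'geqtildeV}.

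\textbf{Step 4: integrate.} For $r\geqslant1$ we have $\tilde V(r)\geqslant\tilde V(1)>0$, because $x\in\Sigma$ and $B_1(x)\cap\Sigma$ contains an open piece of $\Sigma$ on which $\rho>0$. So $\log\tilde V$ is absolutely continuous on $[1,\infty)$ and satisfies $(\log\tilde V)'\geqslant a$ a.e. Equivalently, $\frac{d}{dr}\big(\tilde V e^{-ar}\big)\geqslant0$ a.e., so $\tilde V(r)e^{-ar}$ is nondecreasing. Integrating from $1$ to $r$ gives
\[
\tilde V(r)\geqslant \tilde V(1)e^{-a}e^{ar},
\]
which is the claim with $C=\tilde V(1)e^{-a}$.

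\textbf{Expected obstacle.} The only delicate point is regularity. The divergence theorem and formula \eqref{eq-tildeV'} require regular values of $|X|$ on $\Sigma$. Moreover, a monotone function's a.e.\ derivative alone does not control its growth, so we must use the absolute continuity supplied by the co-area representation. This is what lets the almost-everywhere inequality integrate to the stated bound.
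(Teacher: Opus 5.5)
Your proposal is correct and is essentially the paper's argument: the identity $\Delta\rho=\rho\bigl(1+\lambda(H-\lambda)\bigr)\geqslant a\rho$ with $a=1+\lambda(H_{\inf}-\lambda)$, the divergence theorem on $\Sigma\cap B_r$, the bound of the boundary term by $\tilde V'(r)$ via $|\mathbf V|\leqslant1\leqslant|X|/|X^\top|$, and monotonicity of $\tilde V(r)e^{-ar}$; you additionally make the regular-value and almost-everywhere issues explicit, which the paper leaves implicit. One minor point: the co-area formula only represents the part of $\tilde V$ coming from $\{X^\top\neq0\}$, so absolute continuity of $\tilde V$ is not automatic, but you do not need it, because for the nondecreasing function $F=\log\tilde V$ one always has $F(r)-F(1)\geqslant\int_1^rF'(s)\,ds$, and this is exactly the direction required.
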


\vspace{.1in}

With the help of Proposition\ref{inq-tildeVgeqe}, we can now estimate the volume growth.

\begin{theorem}
	Let $\Sigma^n \subset \mathbb{R}^{n+1}$ be a complete properly immersed $\lambda$-translating soliton with $\lambda>0$ and $H_{\inf}-\lambda>0$. Then for any $x \in \Sigma$, there exists a constant $C$ such that
	\[
	\operatorname{Vol}(\Sigma \cap B_r(x))\geqslant \frac{\tilde{V}(1)}{e^{\big(1+\lambda(H_{\inf}-\lambda)\big)}}\left(\frac{1+\lambda(H_{\inf}-\lambda)}{\lambda(H_{\inf}-\lambda)}\right)e^{\lambda(H_{\inf}-\lambda)r}+C, 
	\]
    for all $r \geqslant 1$, where
        \begin{equation*}
        C=V(1)-\frac{\tilde{V}(1)}{e}\left(\frac{1+\lambda(H_{\inf}-\lambda)}{\lambda(H_{\inf}-\lambda)}\right).
    \end{equation*}
\end{theorem}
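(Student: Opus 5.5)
The plan is to turn the exponential lower bound on the weighted volume $\tilde V$ into one on the unweighted volume $V$, using only that the density $e^{\langle \mathbf{T},X\rangle}$ is bounded above on the extrinsic ball. As in the preceding discussion, I translate so that $x=0$. This is harmless: the soliton equation \eqref{eq-lambdatranslatingsoliton} is invariant under translations, and Proposition \ref{inq-tildeVgeqe} is then applied with base point $0$. Write $a = 1+\lambda(H_{\inf}-\lambda)$. The hypotheses $\lambda>0$ and $H_{\inf}>\lambda$ give $a-1=\lambda(H_{\inf}-\lambda)>0$, and in particular $a>0$, so Proposition \ref{inq-tildeVgeqe} and \eqref{inq-tildeV'geqtildeV} apply.

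\textbf{Step 1 (compare slice integrands).} By the coarea formula, as in \eqref{eq-tildeV'},
\[
V'(s)=\int_{\partial B_s\cap\Sigma}\frac{|X|}{|X^\top|},\qquad \tilde V'(s)=\int_{\partial B_s\cap\Sigma}e^{\langle \mathbf{T},X\rangle}\frac{|X|}{|X^\top|}.
\]
On $\partial B_s$ we have $\langle \mathbf{T},X\rangle\leqslant |X| = s$, since $|\mathbf{T}|=1$. Hence $V'(s)\geqslant e^{-s}\tilde V'(s)$.

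\textbf{Step 2 (lower bound for $\tilde V'$).} Combining \eqref{inq-tildeV'geqtildeV} with Proposition \ref{inq-tildeVgeqe} gives, for $s\geqslant 1$,
\[
\tilde V'(s)\geqslant a\tilde V(s)\geqslant a\,\tilde V(1)e^{-a}e^{as}.
\]
Therefore $V'(s)\geqslant a\tilde V(1)e^{-a}e^{(a-1)s}$.

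\textbf{Step 3 (integrate from $1$ to $r$).} Integrating the bound from Step 2 gives
\[
V(r)\geqslant V(1)+\frac{a\tilde V(1)e^{-a}}{a-1}\bigl(e^{(a-1)r}-e^{a-1}\bigr).
\]
Since $e^{-a}e^{a-1}=e^{-1}$, the subtracted term equals $\frac{\tilde V(1)}{e}\cdot\frac{a}{a-1}$. This is exactly the stated bound, with the stated constant $C$.

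\textbf{Main technical point.} The delicate step is justifying the differentiation and integration of $V$ and $\tilde V$, since $|X^\top|$ may vanish on a set of radii. I would avoid pointwise derivatives altogether and work with the integrated coarea identities
\[
V(r)-V(1)=\int_1^r\!\int_{\partial B_s\cap\Sigma}\frac{|X|}{|X^\top|}\,ds,\qquad \tilde V(r)-\tilde V(1)=\int_1^r\!\int_{\partial B_s\cap\Sigma}e^{\langle \mathbf{T},X\rangle}\frac{|X|}{|X^\top|}\,ds.
\]
Both are valid because properness makes the sets $\Sigma\cap B_r$ compact. In this form, Step 1 holds slice by slice for almost every $s$. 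For Step 2, the slice inequality $\tilde V'(s)\geqslant a\tilde V(s)$ holds for almost every $s$ by the divergence-theorem argument in \eqref{inq-tildeV}, applied at regular values via Sard's theorem. Steps 1–3 then go through for almost every $s$, and that suffices after integration.
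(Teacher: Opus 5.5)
Your proof is correct and is essentially the paper's own argument: bound $\tilde V'(s)\leqslant e^{s}V'(s)$ using $\langle \mathbf{T},X\rangle\leqslant|X|=s$ on $\partial B_s$, chain this with $a\tilde V\leqslant\tilde V'$ and Proposition~\ref{inq-tildeVgeqe}, and integrate from $1$ to $r$. Your almost-everywhere coarea and Sard bookkeeping tightens the paper's formal differentiation; note that you only need the inequality $V(r)-V(1)\geqslant\int_1^r\int_{\partial B_s\cap\Sigma}|X|/|X^\top|\,ds$, which holds regardless of the critical set of $|X|$.

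One caveat, shared with the paper's normalization $x_0=0$: translating to $x=0$ is harmless for the soliton equation but not for $\tilde V$, since the weight $e^{\langle\mathbf{T},X\rangle}$ is rescaled by $e^{-\langle\mathbf{T},x\rangle}$. So $\tilde V$ must be computed with $X$ measured from $x$; otherwise $\tilde V(1)$ in the bound has to be replaced by $e^{-\langle\mathbf{T},x\rangle}\tilde V(1)$.
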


\begin{proof}
    For any $R > 1$, we have
\[
\tilde{V}'(R) \leqslant e^R \int_{\partial B_R \cap \Sigma} \frac{|X|}{|X^{\top}|} = e^R V'(R).
\]
Combining this with Proposition \ref{inq-tildeVgeqe} and \eqref{inq-tildeV'geqtildeV} gives that 
\begin{align*}
    C\bigr(1+\lambda(H_{\inf}-\lambda)\bigl)e^{\big(1+\lambda(H_{\inf}-\lambda)\big)R} 
   \leqslant & \bigr(1+\lambda(H_{\inf}-\lambda)\bigl)\tilde{V}(R) \\
   \leqslant & \tilde{V}'(R) \leqslant e^R V'(R),
\end{align*}
i.e.,
\begin{equation}\label{inq-eV}
    C\bigr(1+\lambda(H_{\inf}-\lambda)\bigl)e^{\lambda(H_{\inf}-\lambda)R} \leqslant V'(R).
\end{equation}
Integrating \eqref{inq-eV} from $1$ to $r$ yields
\[
V(r) - V(1) \geqslant C\left(\frac{1+\lambda(H_{\inf}-\lambda)}{\lambda(H_{\inf}-\lambda)}\right)\left(e^{\lambda(H_{\inf}-\lambda)r}-e^{\lambda(H_{\inf}-\lambda)}\right).
\]
i.e.,
\begin{equation*}
    \begin{split}
        V(r) &\geqslant C\left(\frac{1+\lambda(H_{\inf}-\lambda)}{\lambda(H_{\inf}-\lambda)}\right)\left(e^{\lambda(H_{\inf}-\lambda)r}-e^{\lambda(H_{\inf}-\lambda)}\right)+V(1)\\
        &=\frac{\tilde{V}(1)}{e^{\big(1+\lambda(H_{\inf}-\lambda)\big)}}\left(\frac{1+\lambda(H_{\inf}-\lambda)}{\lambda(H_{\inf}-\lambda)}\right)e^{\lambda(H_{\inf}-\lambda)r}\\
        &\quad +V(1)-\frac{\tilde{V}(1)}{e}\left(\frac{1+\lambda(H_{\inf}-\lambda)}{\lambda(H_{\inf}-\lambda)}\right).
    \end{split}
\end{equation*}
\end{proof}

\vspace{.1in}

\section{Non-existence of \texorpdfstring{$\lambda$}--Translating Solitons}

\vspace{.1in}

In this section, we will prove two non-existence results for graphical $\lambda$-translating solitons with bounded gradient. We will first show that:

\begin{proposition}\label{Proposition-infH-lambda}
	Let \( \Sigma^{2} \) be an oriented complete noncompact properly immersed $\lambda$-translating soliton in \( \mathbb{R}^{3} \) with $\lambda\geqslant0$, $\langle \mathbf{T},\nu \rangle>0$ and quadratic area growth.  
	Then:
	\begin{equation*}
		\inf_{\Sigma}\big(H-\lambda\big)=\inf_{\Sigma}\langle \mathbf{T},\nu \rangle=0.
	\end{equation*}
\end{proposition}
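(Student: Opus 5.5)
The plan is to argue by contradiction, using the function $f=\langle \mathbf{T},X\rangle$ together with a logarithmic cutoff adapted to the quadratic area growth. Since $\langle \mathbf{T},\nu\rangle>0$, we trivially have $\inf_\Sigma(H-\lambda)=\inf_\Sigma\langle\mathbf{T},\nu\rangle\geqslant 0$. So suppose instead that $\inf_\Sigma\langle \mathbf{T},\nu\rangle=\delta>0$.

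\emph{Step 1: $f$ is uniformly strictly subharmonic with bounded gradient.} The computation in the proof of Lemma \ref{lem:rho} gives
\[
\Delta f=\langle \mathbf{T},\mathbf{H}\rangle=H(H-\lambda),\qquad |\nabla f|=|\mathbf{V}|\leqslant 1 .
\]
Since $\lambda\geqslant 0$ and $H-\lambda\geqslant\delta$, we get $H\geqslant\lambda+\delta\geqslant\delta$. Hence $\Delta f\geqslant c:=\delta(\lambda+\delta)\geqslant\delta^2>0$ on all of $\Sigma$. This is the only place where $\lambda\geqslant0$ enters, and it lets us treat $\lambda=0$ and $\lambda>0$ uniformly. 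For $\lambda>0$ one could alternatively invoke the exponential volume growth theorem of Section 3, but that result gives nothing when $\lambda=0$.

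\emph{Step 2: integrate against a cutoff.} Let $\varphi$ be a compactly supported Lipschitz function on $\Sigma$; properness guarantees that functions of $|X|$ with compact support are compactly supported on $\Sigma$. Multiplying $\Delta f\geqslant c$ by $\varphi^2$, integrating by parts, and applying Cauchy--Schwarz with $|\nabla f|\leqslant1$ gives
\[
c\int_\Sigma\varphi^2\leqslant-2\int_\Sigma\varphi\langle\nabla\varphi,\nabla f\rangle\leqslant 2\Bigl(\int_\Sigma\varphi^2\Bigr)^{1/2}\Bigl(\int_\Sigma|\nabla\varphi|^2\Bigr)^{1/2}.
\]
Therefore
\[
\int_\Sigma\varphi^2\leqslant\frac{4}{c^2}\int_\Sigma|\nabla\varphi|^2 .
\]

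\emph{Step 3: logarithmic cutoff (the main point).} Fix $R>1$ and take the extrinsic cutoff
\[
\varphi=1 \text{ on } B_R,\qquad \varphi=\frac{\log(R^2/|X|)}{\log R}\text{ on } B_{R^2}\setminus B_R,\qquad \varphi=0 \text{ outside } B_{R^2}.
\]
Then $|\nabla_\Sigma\varphi|\leqslant 1/(|X|\log R)$ on the annulus. Decompose the annulus into dyadic shells $B_{2^{k+1}R}\setminus B_{2^kR}$ for $0\leqslant k\lesssim\log_2 R$. On each shell, quadratic area growth $V(r)\leqslant Kr^2$ gives
\[
\int_{\text{shell}}|\nabla\varphi|^2\leqslant \frac{K(2^{k+1}R)^2}{(2^kR)^2(\log R)^2}=\frac{4K}{(\log R)^2}.
\]
Summing over the $O(\log R)$ shells yields
\[
\int_\Sigma|\nabla\varphi|^2\leqslant \frac{C'}{\log R}.
\]
This is exactly where the dimension $n=2$ and the quadratic growth are used: in higher dimension, or with faster growth, this quantity would not decay.

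\emph{Step 4: conclude.} Combining Steps 2 and 3,
\[
\operatorname{Vol}(\Sigma\cap B_R)\leqslant\int_\Sigma\varphi^2\leqslant \frac{4C'}{c^2\log R}\longrightarrow 0 \quad\text{as } R\to\infty .
\]
But $\operatorname{Vol}(\Sigma\cap B_R)$ is nondecreasing in $R$ and positive for $R$ large, since $\Sigma$ is nonempty. This is a contradiction, so $\delta=0$, and hence $\inf_\Sigma(H-\lambda)=\inf_\Sigma\langle\mathbf{T},\nu\rangle=0$.

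The main care needed is in Step 3: checking that properness makes $\varphi$ an admissible compactly supported test function, and that the dyadic bookkeeping produces the $1/\log R$ decay.
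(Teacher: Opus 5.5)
Your proof is correct, and it takes a genuinely different route from the paper's.

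\textbf{The paper's route.} The paper works with the positive solution $\langle\mathbf{T},\nu\rangle$ of the Jacobi-type equation \eqref{eq-Deltaanu}. It sets $f=1/\langle\mathbf{T},\nu\rangle$ and tests the resulting equation for $f$ against $\phi^2f^p$, with the tuned choices $\epsilon=\delta^2/(4(1-\delta^2))$ and $p=(1-\delta^2)/\delta^2$. The drift term is absorbed using $|\mathbf{V}|^2\leqslant1-\delta^2$, $H\geqslant\delta$ and the two-dimensional inequality $|B|^2\geqslant\frac12H^2$. The logarithmic cutoff then gives $|B|\equiv0$, so $\Sigma$ would be a plane with $H=0$, contradicting $H-\lambda\geqslant\delta>0$.

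\textbf{Your route.} You bypass all second-order information. The first-order identity $\Delta\langle\mathbf{T},X\rangle=H(H-\lambda)$, already in the proof of Lemma \ref{lem:rho}, makes the height function uniformly subharmonic with gradient at most $1$. This gives a spectral gap $\int\varphi^2\leqslant\frac{4}{c^2}\int|\nabla\varphi|^2$, which the logarithmic cutoff then rules out under quadratic growth.

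\textbf{What each buys.} Your argument is simpler and more general. Contrary to your closing remark, the dimension $n=2$ plays no role in it; only the bound $V(r)\leqslant Kr^2$ does. Even quadratic growth is more than you need. The divergence theorem on $\Sigma\cap B_R$ combined with the coarea formula gives $c\,V(R)\leqslant\operatorname{Length}(\Sigma\cap\partial B_R)\leqslant V'(R)$. That is exponential area growth at rate $\delta(\lambda+\delta)$ in every dimension, including $\lambda=0$, where the estimate of Section 3 degenerates. The paper's route is the standard Liouville-type argument for a positive Jacobi field. It is heavier and tied to specific constants, but it produces an intermediate rigidity conclusion ($f$ constant, $|B|\equiv0$) rather than only a contradiction with the growth rate.
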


\begin{proof}
	We prove it by contradiction. Suppose there is a $\lambda$-translating soliton \( \Sigma^{2} \) in \( \mathbb{R}^{3} \) with $\lambda\geqslant0$, $\langle \mathbf{T},\nu \rangle>0$,
    quadratic area growth and 
	\begin{equation*}
		\inf_{\Sigma}\big(H-\lambda\big)=\inf_{\Sigma}\langle \mathbf{T},\nu \rangle= \delta>0.
	\end{equation*}
    From (\ref{e-T}), we have 
    \begin{equation*}
        |{\bf V}|^2=|{\bf T}|^2-(H-\lambda)^2\leqslant 1-\delta^2.
    \end{equation*}
	Set $f=\frac{1}{\langle \mathbf{T}, \nu \rangle}$, then
	\begin{equation}\label{inqboundoff}
		0< f\leqslant\frac{1}{\delta}.
	\end{equation}
	By \eqref{eq-Deltaanu}, we have
	\begin{equation}\label{eq-f}
		\Delta f=|B|^2f+\frac{2}{f}|\nabla f|^2-\langle {\bf V}, \nabla f \rangle.
	\end{equation}
	
	Let \(\phi\) be any cutoff function and \(p\) be a positive number to be determined later. 
	Multiplying both sides of \eqref{eq-f} by \(\phi^2 f^p\) and integrating by parts yields
	\begin{equation*}
		\begin{split}
			&\int_{\Sigma} \phi^2 f^{p+1} |B|^2 \, d\mu 
			+ 2 \int_{\Sigma} \phi^2 f^{p-1} |\nabla f|^2 \, d\mu 
			- \int_{\Sigma} \phi^2 f^p \langle {\bf V}, \nabla f \rangle \, d\mu \\
			=& \int_{\Sigma} \phi^2 f^p \Delta f \\
			=& -p \int_{\Sigma} \phi^2 f^{p-1} |\nabla f|^2 \, d\mu 
			- 2 \int_{\Sigma} \phi f^p \langle \nabla \phi, \nabla f \rangle \, d\mu .
		\end{split}
	\end{equation*}
	Rearranging this equality and using Young’s inequality, we obtain
	\begin{equation*}
		\begin{split}
			&p\int_{\Sigma} \phi^2 f^{p-1} |\nabla f|^2 \, d\mu 
			+ 2\int_{\Sigma} \phi^2 f^{p-1} |\nabla f|^2 \, d\mu + \int_{\Sigma} \phi^2 f^{p+1} |B|^2 \, d\mu \\
			=& \int_{\Sigma} \phi^2 f^p \langle {\bf V}, \nabla f \rangle \, d\mu 
			- 2 \int_{\Sigma} \phi f^p \langle \nabla \phi, \nabla f \rangle \, d\mu \\
			\leqslant &\int_{\Sigma} \phi^2 f^p |{\bf V}| \, |\nabla f| \, d\mu 
			+ 2 \int_{\Sigma} \phi f^p |\nabla \phi| \, |\nabla f| \, d\mu \\
			\leqslant &\epsilon \int_{\Sigma} \phi^2 f^{p+1} |{\bf V}|^2 \, d\mu 
			+ \frac{1}{4\epsilon} \int_{\Sigma} \phi^2 f^{p-1} |\nabla f|^2 \, d\mu \\
			\quad& + \int_{\Sigma} \phi^2 f^{p-1} |\nabla f|^2 \, d\mu 
			+ \int_{\Sigma} f^{p+1} |\nabla \phi|^2 \, d\mu ,
		\end{split}
	\end{equation*}
	which implies that
	\begin{equation}\label{inqfphi1}
		\begin{split}
			&\left( p + 1 - \frac{1}{4\epsilon} \right) \int_{\Sigma} \phi^2 f^{p-1} |\nabla f|^2 \, d\mu 
			+ \int_{\Sigma} \phi^2 f^{p+1} (|B|^2 - \epsilon |{\bf V}|^2) \, d\mu \\
			\leqslant & \int_{\Sigma} f^{p+1} |\nabla \phi|^2 \, d\mu.
		\end{split}
	\end{equation}
	Furthermore,
	\begin{equation}\label{inqfphi2}
		\begin{split}
			& \int_{\Sigma} \phi^2 f^{p+1} (\frac{1}{2}|B|^2 + \frac{1}{4}\delta^2 - \epsilon (1-\delta^2)) \, d\mu \\
			\leqslant &  \int_{\Sigma} \phi^2 f^{p+1} (\frac{1}{2}|B|^2 + \frac{1}{4}|H|^2 - \epsilon |{\bf V}|^2) \, d\mu \\
			\leqslant &  \int_{\Sigma} \phi^2 f^{p+1} (|B|^2 - \epsilon |{\bf V}|^2) \, d\mu.
		\end{split}
	\end{equation}
	We first choose \( \epsilon = \frac{\delta^2}{4(1-\delta^2)} \) so that \( \frac{1}{4}\delta^2 - \epsilon (1-\delta^2) = 0 \), 
	then take \( p = \frac{1-\delta^2}{\delta^2} \) so that \( p + 1 - \frac{1}{4\epsilon} = 1 \). 
    Note that \(\delta = 1\) if and only if \(\langle T,\nu\rangle \equiv 1\); in this case \(\Sigma\) is a hyperplane perpendicular to \(T\) with \(\lambda = -1\), which has already been excluded by \(\lambda \geqslant 0\).
	Then we obtain from \eqref{inqfphi1} and \eqref{inqfphi2} that
	\begin{equation}\label{inqfphi3}
		\begin{split}
			&\int_{\Sigma} \phi^2 f^{p-1} |\nabla f|^2 \, d\mu 
			+ \frac{1}{2}\int_{\Sigma} \phi^2 f^{p+1}|B|^2 \, d\mu 
			\leqslant \int_{\Sigma} f^{p+1} |\nabla \phi|^2 \, d\mu.
		\end{split}
	\end{equation}
	Next, we will choose appropriate cutoff function to deduce that \( f \) is a constant function and $|B|^2\equiv0$. 
	We will use the logarithmic cutoff argument. Let \( R > 1 \) be any fixed number. 
	Define the cutoff function \( \phi \) on \( \mathbb{R}^3 \) as follows: Let \( r \) denote the distance to the origin in \( \mathbb{R}^3 \), define
	\[
	\phi = 
	\begin{cases} 
		1, & r^2 \leqslant R; \\
		2 - 2 \frac{\log r}{\log R}, & R < r^2 \leqslant R^2; \\
		0, & r^2 > R^2.
	\end{cases}
	\]
	Recall that we assume the $\lambda$-translating soliton has quadratic area growth. 
	From \eqref{inqboundoff} and \eqref{inqfphi3}, we have
	\begin{equation*}
		\begin{split}
			&\int_{\Sigma \cap B(0, \sqrt{R})} f^{p-1} |\nabla f|^2 \, d\mu
			+ \frac{1}{2}\int_{\Sigma \cap B(0, \sqrt{R})} f^{p+1}|B|^2 \, d\mu  \\
			\leqslant& \int_{\Sigma} \phi^2 f^{p-1} |\nabla f|^2 \, d\mu 
			+ \frac{1}{2}\int_{\Sigma} \phi^2 f^{p+1}|B|^2 \, d\mu \\
			\leqslant& \int_\Sigma f^{p+1} |\nabla \phi|^2 \, d\mu \\
			\leqslant& \frac{4}{\delta^{p+1}(\log R)^2} \sum_{\frac{1}{2}\log R \leqslant l \leqslant \log R} \int_{\Sigma \cap (B(0, e^l) \setminus B(0, e^{l-1}))} r^{-2} \, d\mu \\
			\leqslant& \frac{4}{\delta^{p+1}(\log R)^2} \sum_{\frac{1}{2}\log R \leqslant l \leqslant \log R} e^{-2(l-1)} D_0 e^{2l} \\
			\leqslant& \frac{C}{\log R},
		\end{split}
	\end{equation*}
	where \( C \) depends only on \( D_0 \) and \( \delta \). 
	As \( f>0 \), letting \( R \to \infty \), we get that \( f \) is a constant and $|B|^2\equiv0$.
	Next, based on the inequality
	\begin{equation*}
		|B|^2\geqslant\frac{1}{2}|\mathbf{H}|^2,
	\end{equation*}
	we know $H=0$ and $\inf_{\Sigma}\big(H-\lambda\big)\leqslant0.$
	This gives the desired contradiction.
\end{proof}

Using Proposition \ref{Proposition-infH-lambda}, we can prove that

\begin{theorem}
	When $n = 2$ and $\lambda\geqslant0$, there is no entire solution to the equation \eqref{eq-graphlambdatranslatingsoliton}
	with bounded gradient.
\end{theorem}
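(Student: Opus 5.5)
We argue by contradiction. Suppose $u:\mathbb{R}^2\to\mathbb{R}$ is an entire solution of \eqref{eq-graphlambdatranslatingsoliton} with $\sup|Du|\leqslant M<\infty$, and let $\Sigma$ be its graph with upward normal $\nu=(-Du,1)/\sqrt{1+|Du|^2}$. The plan is to check that $\Sigma$ meets every hypothesis of Proposition \ref{Proposition-infH-lambda}, and then to show that bounded gradient forces $\inf_\Sigma\langle\mathbf{T},\nu\rangle>0$, which contradicts the conclusion of that proposition.

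First, $\Sigma$ is an embedded entire graph. It is therefore oriented, properly embedded (the map $x\mapsto(x,u(x))$ is proper) and complete, since the induced metric dominates the flat metric on $\mathbb{R}^2$. It is noncompact. With $\mathbf{T}=(0,0,1)$ and $\nu$ the upward normal, equation \eqref{eq-graphlambdatranslatingsoliton} is exactly \eqref{eq-lambdatranslatingsoliton}, and
\[
\langle\mathbf{T},\nu\rangle=\frac{1}{\sqrt{1+|Du|^2}}>0 .
\]
Next comes quadratic area growth. Because $u$ is $M$-Lipschitz, the set $\Sigma\cap B_r(0)$ projects into the disc $D_r\subset\mathbb{R}^2$. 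The area element is $\sqrt{1+|Du|^2}\,dx\leqslant\sqrt{1+M^2}\,dx$, so
\[
\operatorname{Vol}(\Sigma\cap B_r)\leqslant\pi\sqrt{1+M^2}\,r^2 .
\]
This gives a bound of the form $D_0r^2$, which is what the logarithmic cutoff in the proof of Proposition \ref{Proposition-infH-lambda} uses, with $D_0=\pi\sqrt{1+M^2}$.

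Proposition \ref{Proposition-infH-lambda} now applies with $\lambda\geqslant0$, and gives $\inf_\Sigma\langle\mathbf{T},\nu\rangle=0$. On the other hand, the gradient bound gives
\[
\langle\mathbf{T},\nu\rangle=(1+|Du|^2)^{-1/2}\geqslant(1+M^2)^{-1/2}>0
\]
everywhere, so the infimum is at least $(1+M^2)^{-1/2}$. This is a contradiction, and so no such $u$ exists.

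The only step needing any care is confirming that every hypothesis of Proposition \ref{Proposition-infH-lambda} holds, in particular that the normal convention matches. The paper uses the inner normal, and for a graph solving \eqref{eq-graphlambdatranslatingsoliton} this is the upward normal. There is also the degenerate case $\delta=1$, i.e.\ $Du\equiv0$. In that case \eqref{eq-graphlambdatranslatingsoliton} reads $0=1+\lambda$, which is impossible for $\lambda\geqslant0$; the proposition's proof already excludes this case.
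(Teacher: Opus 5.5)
Your proposal is correct and is essentially the paper's proof. Both arguments verify the hypotheses of Proposition \ref{Proposition-infH-lambda} for the graph, with quadratic area growth coming from $\Sigma\cap B(0,r)\subset \mathrm{Graph}_u(\hat{B}(0,r))$ and $\sqrt{1+|Du|^2}\leqslant\sqrt{1+M^2}$, and then contradict its conclusion $\inf_\Sigma\langle\mathbf{T},\nu\rangle=0$ with the pointwise bound $\langle\mathbf{T},\nu\rangle\geqslant(1+M^2)^{-1/2}$. (Minor point: the projection of $\Sigma\cap B_r(0)$ lies in $D_r$ simply because $|x|\leqslant|(x,u(x))|$, so the Lipschitz property is not needed for that step.)
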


\begin{proof}
	We prove it by contradiction. Suppose there is an entire solution $u$ to the equation \eqref{eq-graphlambdatranslatingsoliton} defined on the whole $\mathbb{R}^2$ with
	\begin{equation*}\label{eq-DuM}
		|Du| \leqslant M.
	\end{equation*}
	From \eqref{eq-graphlambdatranslatingsoliton}, we see that
	\begin{equation*}\label{eq:4.9}
		H-\lambda = \frac{1}{\sqrt{1+|Du|^2}}\geqslant \frac{1}{\sqrt{1+M^2}}.
	\end{equation*}
	Next, we will show that $\Sigma$ has quadratic area growth. We denote by $\hat{B}(0, r)$ the ball of radius $r$ centered at $0$ in the domain plane $\mathbb{R}^2$, while denote by $B(0, r)$ the ball of radius $r$ centered at $0$ in $\mathbb{R}^3$. It is obvious that
	\[
	\Sigma \cap B(0, r) \subset \mathrm{Graph}_u(\hat{B}(0, r)).
	\]
	Therefore,
	\begin{equation*}\label{eq:4.10}
		\begin{aligned}
			\mathrm{Area}(\Sigma \cap B(0, r)) &\leqslant \mathrm{Area}(\mathrm{Graph}_u(\hat{B}(0, r))) \\
			&= \int_{\hat{B}(0,r)} \sqrt{1+|Du|^2} dxdy \\
			&\leqslant \sqrt{1+M^2}\pi r^2 \equiv C_1 r^2.
		\end{aligned}
	\end{equation*}
	Combining the above together, we find a proper complete $\lambda$-translating soliton in $\mathbb{R}^3$ with $H-\lambda\geqslant \frac{1}{\sqrt{1+M^2}} > 0$ and quadratic area growth. 
    This contradicts the Proposition \ref{Proposition-infH-lambda}.
\end{proof}

\vspace{.1in}

For high dimensional case, we only consider the situation that $\Sigma$ is rotationally symmetric. In this case, the graph function $u$ satisfies the equation (\ref{eq-rotationalgraphlambdatranslatingsoliton}). We first have the following observation:

\begin{proposition}\label{Proposition-infH-lambda2}
Let  $u=u(r)\in C^{2}[0,+\infty)$ be a single variable function with bounded derivative satisfying (\ref{eq-rotationalgraphlambdatranslatingsoliton}). Then there exists a sequence $r_i \to \infty$, such that
\[
\lim_{i\to\infty} u''(r_i) = 0.
\]
\end{proposition}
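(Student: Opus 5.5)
The plan is to argue by contradiction using only the boundedness of $u'$. Suppose no such sequence exists. Then there are $\varepsilon>0$ and $R_0>0$ with $|u''(r)|\geqslant\varepsilon$ for all $r\geqslant R_0$. Since $u\in C^2[0,+\infty)$, the function $u''$ is continuous, so by the intermediate value theorem it cannot change sign on $[R_0,\infty)$ without passing through a value in $(-\varepsilon,\varepsilon)$. Therefore either $u''\geqslant\varepsilon$ on $[R_0,\infty)$, or $u''\leqslant-\varepsilon$ there.

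In both cases we integrate from $R_0$ to $r$:
\[
|u'(r)-u'(R_0)|\geqslant\varepsilon\,(r-R_0)\to\infty \quad (r\to\infty).
\]
This contradicts the assumption $|u'|\leqslant M$. Hence
\[
\liminf_{r\to\infty}|u''(r)|=0,
\]
which yields a sequence $r_i\to\infty$ with $u''(r_i)\to0$.

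Note that this argument never uses equation \eqref{eq-rotationalgraphlambdatranslatingsoliton}; the bounded derivative and continuity of $u''$ are enough. The only step needing care is the sign dichotomy, and continuity of $u''$ settles it. If a more quantitative statement were wanted, we would instead use the mean value theorem on the intervals $[k,k+1]$. This gives points $r_k\in(k,k+1)$ with $u''(r_k)=u'(k+1)-u'(k)$. Since $u'$ is bounded and monotone along a subsequence argument is not available in general, we would rely on the contradiction argument above, which already suffices.
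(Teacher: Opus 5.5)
Your proof is correct and is essentially the paper's argument. The paper splits into cases on the signs of $\liminf_{r\to\infty} u''$ and $\limsup_{r\to\infty} u''$. It uses the intermediate value theorem to produce zeros of $u''$ when $u''$ changes sign infinitely often, and otherwise integrates a positive lower bound on $u''$ to contradict $|u'|\leqslant M$. That is exactly your continuity-based sign dichotomy followed by integration, with the paper also noting that equation \eqref{eq-rotationalgraphlambdatranslatingsoliton} is not really needed. Your closing aside about the mean value theorem on $[k,k+1]$ is garbled and plays no role, so you can simply delete it.
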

\begin{proof}
Denote
\[
\liminf_{r\to\infty} u''(r) = \xi \quad\text{and} \quad\limsup_{r\to\infty} u''(r) = \eta,
\]
where we allow $\xi,\eta\in[-\infty,+\infty]$.
Note equation (\ref{eq-rotationalgraphlambdatranslatingsoliton}) together with $|u'| \leqslant M$ yields 
$|u''(r)| \leqslant (1+M^{2})(1 + \lambda(1+M^{2})^{\frac{1}{2}} + \frac{(n-1)M}{r})$, so $\xi$ and $\eta$ are in fact finite. 
The following case argument, however, is valid whether or not they are finite. 

\noindent\textbf{Case 1: $\xi < 0$, $\eta > 0$.}
In this case, there will be two sequences $a_i \to \infty$ and $b_i \to \infty$, such that
\[
u''(a_i) < 0,\quad u''(b_i) > 0.
\]
By selecting subsequences if necessary, we may assume that for each $i$
\[
a_i < b_i < a_{i+1} < b_{i+1}.
\]
Since $u''$ is continuous, we see that there exists $r_i \in (a_i, b_i)$, such that $u''(r_i) = 0$.
It is obvious that $r_i \to \infty$.

\noindent\textbf{Case 2: $\xi = 0$ or $\eta = 0$.}
In this case, it is obvious that there will be a sequence $r_i \to \infty$, such that
\[
\lim_{i\to\infty} u''(r_i) = 0.
\]

\noindent\textbf{Case 3: $\xi > 0$ or $\eta < 0$.}
Without loss of generality, we assume that $\xi > 0$.
Then there exists a constant $K > 0$, such that $u''(r) \geqslant \dfrac{\xi}{2} > 0$ for all $r \geqslant K$.
Therefore, for $r > K$,
\[
u'(r) \geqslant u'(K) + \dfrac{\xi}{2}(r - K) \to \infty \quad \text{as } r \to \infty,
\]
which contradicts bounded gradient. 

\end{proof}

Using Proposition \ref{Proposition-infH-lambda2}, we can prove that

\begin{theorem}
    There does not exist a rotationally symmetric graphical complete $\lambda$-translating soliton
    with bounded gradient for $\lambda\geqslant0$.
\end{theorem}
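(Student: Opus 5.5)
We argue by contradiction. Suppose $u=u(r)\in C^2[0,\infty)$ is a solution of \eqref{eq-rotationalgraphlambdatranslatingsoliton} with $\lambda\geqslant0$ and $|u'|\leqslant M$. By Proposition \ref{Proposition-infH-lambda2} there is a sequence $r_i\to\infty$ with $u''(r_i)\to0$. We evaluate \eqref{eq-rotationalgraphlambdatranslatingsoliton} along this sequence and pass to the limit, with the goal of showing that the left-hand side tends to zero while the right-hand side stays bounded below by a positive constant.

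\textbf{Left-hand side.} The first term satisfies
\[
\frac{|u''(r_i)|}{\bigl(1+u'(r_i)^2\bigr)^{3/2}}\leqslant |u''(r_i)|\to0 .
\]
For the second term, the bound $|u'|\leqslant M$ gives
\[
\left|\frac{(n-1)u'(r_i)}{r_i\sqrt{1+u'(r_i)^2}}\right|\leqslant\frac{(n-1)M}{r_i}\to0 .
\]
Hence the whole left-hand side tends to $0$ as $i\to\infty$.

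\textbf{Right-hand side.} Because $\lambda\geqslant0$ and $|u'|\leqslant M$,
\[
\frac{1}{\sqrt{1+u'(r_i)^2}}+\lambda\;\geqslant\;\frac{1}{\sqrt{1+M^2}}\;>\;0
\]
for every $i$. This uniform lower bound contradicts the vanishing of the left-hand side in the limit, so no such solution exists.

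\textbf{Main obstacle.} The key point is where the hypotheses enter. Bounded gradient is what makes the lower bound on the right-hand side uniform, and it is also needed for Proposition \ref{Proposition-infH-lambda2} itself. The sign condition $\lambda\geqslant0$ is essential: for $\lambda<0$ the right-hand side can tend to $0$, which is exactly the behavior of the counterexamples in the Remark, where $u'\to m_\lambda$. Beyond this, one should check that a complete rotationally symmetric graph corresponds to a $C^2$ solution on $[0,\infty)$; this is regularity at the axis $r=0$, which follows from smoothness of the graph. Given Proposition \ref{Proposition-infH-lambda2}, the remaining argument is routine.
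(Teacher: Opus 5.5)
Your proof is correct and is essentially the paper's argument: you evaluate \eqref{eq-rotationalgraphlambdatranslatingsoliton} along the sequence $r_i\to\infty$ from Proposition~\ref{Proposition-infH-lambda2}, where both left-hand terms tend to $0$ while the right-hand side stays $\geqslant 1/\sqrt{1+M^2}>0$. As a side remark, the same two bounds already give $u''(r)\geqslant \frac{1}{2\sqrt{1+M^2}}$ for all sufficiently large $r$, which forces $u'\to\infty$, so the proposition could even be bypassed.
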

\begin{proof}
    We prove it by contradiction. Suppose there is a rotational entire solution $u$ to the equation \eqref{eq-rotationalgraphlambdatranslatingsoliton} defined on the whole $\mathbb{R}^n$ with
	\begin{equation*}\label{eq-DuM1}
		|u'| \leqslant M.
	\end{equation*}
    Then we have from \eqref{eq-rotationalgraphlambdatranslatingsoliton} that
    \begin{equation*}
    \frac{u''}{\bigr(1+(u')^{2} \bigl)^{\frac{3}{2}}}+\frac{(n-1)u'}{r\sqrt{1+(u')^2}}=\frac{1}{\sqrt{1+(u')^{2} }}+\lambda
    \geqslant \frac{1}{\sqrt{1+M^{2} }}+\lambda
    >0.
\end{equation*}
Now we can take values at $r_i$ given by Proposition \ref{Proposition-infH-lambda2} in the above inequality and let $r_i\to\infty $ to obtain the desired contradiction.
\end{proof}

\vspace{.2in}

\section{Rigidity Results for \texorpdfstring{$\lambda$}--Translating Solitons}

\vspace{.1in}

In this section, we will prove two rigidity theorems for $\lambda$-Translating Solitons.

\subsection*{Rigidity Theorem in terms of Gauss Map of \texorpdfstring{$\lambda$}--Translating Solitons}

In this subsection, we first recall the following test function considered by Bao-Shi (\cite{bao2014gauss}):

\begin{lemma}[{\cite{bao2014gauss}}]\label{lem:test-function}
	On any ball $B_\Lambda^{S^n}(y_0)$ of $\mathbb{S}^n$, $\Lambda < \frac{\pi}{2}$, let $\overline{d}$ be the distance function from $y_0$ on $\mathbb{S}^n$, we define $\varphi(y) = 1 - \cos \overline{d}(y)$ on $B_\Lambda^{S^n}(y_0)$, then $\varphi$ satisfies the following properties:
	\begin{enumerate}[(1)]
		\item There exists a constant $b$, such that $0 \leqslant \varphi < b < 1$;
		\item $\frac{d\varphi}{d \overline{d}} = \sin \overline{d}$;
		\item $\operatorname{Hess}\varphi = (\cos \overline{d})I$, where $\operatorname{Hess}\varphi$ is the hessian of $\varphi$, and $I$ is the identity matrix.
	\end{enumerate}
\end{lemma}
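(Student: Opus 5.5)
We verify the three properties directly from the geometry of the round sphere. Throughout we write $\overline{d}(y)$ for the spherical distance from $y_0$, and we restrict to $B_\Lambda^{S^n}(y_0)$ with $\Lambda<\frac{\pi}{2}$. This ball stays away from both the cut locus $-y_0$ and the equator $\{\overline{d}=\pi/2\}$.

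\emph{Property (1).} On the ball we have $0\leqslant \overline{d}<\Lambda$. Since $\cos$ is decreasing on $[0,\pi/2]$, this gives $1\geqslant\cos\overline{d}>\cos\Lambda>0$. Hence $0\leqslant\varphi<1-\cos\Lambda$. We take $b=1-\cos\Lambda$, which is less than $1$ precisely because $\Lambda<\pi/2$. If strict inequality $\varphi<b$ is wanted with a margin, any $b\in(1-\cos\Lambda,1)$ works.

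\emph{Property (2).} This is just the chain rule applied to $\varphi=1-\cos\overline{d}$: we get $d\varphi/d\overline{d}=\sin\overline{d}$.

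\emph{Property (3).} The cleanest route avoids computing the Hessian of $\overline{d}$ separately. We embed $\mathbb{S}^n\subset\mathbb{R}^{n+1}$ and note that $\cos\overline{d}(y)=\langle y,y_0\rangle$. So $\varphi$ is the restriction of the affine function $F(y)=1-\langle y,y_0\rangle$.

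For a function restricted to a hypersurface, the intrinsic Hessian equals the ambient Hessian minus a second-fundamental-form term:
\[
\operatorname{Hess}^{\mathbb{S}^n}\varphi(V,W)=D^2F(V,W)+\langle \overline{D}F,\mathrm{II}(V,W)\rangle .
\]
Here $D^2F=0$ because $F$ is affine. The sphere's second fundamental form with respect to the position vector is $\mathrm{II}(V,W)=-\langle V,W\rangle y$. Therefore
\[
\operatorname{Hess}\varphi(V,W)=\langle -y_0,-y\rangle\langle V,W\rangle=\cos\overline{d}\,\langle V,W\rangle .
\]
This is $(\cos\overline{d})I$.

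As a cross-check, one can use geodesic polar coordinates instead. There $\operatorname{Hess}\overline{d}=\cot\overline{d}\,(I-\partial_{\overline{d}}\otimes\partial_{\overline{d}})$, and
\[
\operatorname{Hess}\varphi=\cos\overline{d}\,d\overline{d}^2+\sin\overline{d}\operatorname{Hess}\overline{d},
\]
which again equals $\cos\overline{d}\,I$. This also shows the formula is smooth at $y_0$, where $\overline{d}$ itself is not smooth.

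The only step that needs any care is the sign and normalization in the Hessian identity. Using the inner-product representation $\cos\overline{d}=\langle y,y_0\rangle$ handles both sign and smoothness uniformly, so there is no real obstacle here.
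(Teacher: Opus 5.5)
Your proof is correct. The paper gives no argument for this lemma; it is quoted from Bao--Shi. The standard verification is the polar-coordinate one you list as a cross-check: on the round sphere $\operatorname{Hess}\overline{d}=\cot\overline{d}\,(g-d\overline{d}\otimes d\overline{d})$, and this is combined with $\operatorname{Hess}\varphi=\varphi''(\overline{d})\,d\overline{d}\otimes d\overline{d}+\varphi'(\overline{d})\operatorname{Hess}\overline{d}$.

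Your main route is genuinely different and cleaner. You write $\varphi$ as the restriction of the affine function $1-\langle y,y_0\rangle$ and use $\operatorname{Hess}^{\mathbb{S}^n}\varphi(V,W)=\langle \overline{D}F,\mathrm{II}(V,W)\rangle$ with $\mathrm{II}(V,W)=-\langle V,W\rangle y$. This needs no formula for the Hessian of the distance function. It also shows at once that $\varphi$ is smooth on the whole ball, including at $y_0$.

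One correction of attribution: smoothness at $y_0$ comes from this representation, not from the polar computation. The polar formula only holds away from $y_0$, where $\overline{d}$ is smooth, so your sentence ``This also shows the formula is smooth at $y_0$'' belongs to the embedding argument.

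In (1), make one point explicit. The proof of Theorem~\ref{thm:main1} uses $b-\varphi\geqslant b-(1-\cos\Lambda)>0$. So the choice actually needed downstream is $b\in(1-\cos\Lambda,1)$, not $b=1-\cos\Lambda$. You mention this option; it is the one to adopt as the definition of $b$.
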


With this lemma and lemma \ref{lem:bochner}, we can obtain the rigidity theorem in terms of Gauss map of $\lambda$-translating solitons.

\begin{theorem}\label{thm:main1}
	Let $\Sigma^n \subset \mathbb{R}^{n+1}$ be an $n$-dimensional complete $\lambda$-translating soliton with $\lambda B$ positive semi-definite. If the image of Gauss map $\gamma$ of $\Sigma^n$ lies in a ball $B_\Lambda^{S^n}(y_0)$ of $\mathbb{S}^n$, where $\Lambda < \frac{\pi}{2}$, then $\Sigma^n$ must be a hyperplane.
\end{theorem}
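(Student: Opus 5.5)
We follow the Bao--Shi strategy, using the test function of Lemma \ref{lem:test-function} composed with the Gauss map together with the Bochner formula of Lemma \ref{lem:bochner}. Set $\psi=\varphi\circ\gamma$. By the composition formula and Lemma \ref{lem:quasi-harmonic},
\[
\Delta\psi=\operatorname{Hess}\varphi(d\gamma,d\gamma)+d\varphi(\tau(\gamma))=\cos\overline d\,|\nabla\gamma|^2-\langle \mathbf V,\nabla\psi\rangle .
\]
Since $\cos\overline d\geqslant\cos\Lambda>0$, we get $\Delta\psi+\langle\mathbf V,\nabla\psi\rangle\geqslant \cos\Lambda\,|\nabla\gamma|^2$. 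From Lemma \ref{lem:bochner} and the assumption $\lambda B\geqslant0$ (so $\lambda B(\nabla\gamma,\nabla\gamma)\geqslant0$), we get
\[
\Delta|\nabla\gamma|^2+\langle\mathbf V,\nabla|\nabla\gamma|^2\rangle\geqslant 2|\nabla d\gamma|^2-2|\nabla\gamma|^4 .
\]
Thus both quantities are controlled by the drift operator $\mathcal L_{\mathbf V}=\Delta+\langle\mathbf V,\nabla\cdot\rangle$, exactly as in the translator case. Note that $|\mathbf V|\leqslant1$, so the drift is bounded.

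Next, we form the auxiliary function $F=|\nabla\gamma|^2/(b-\psi)^2$ with $b<1$ as in Lemma \ref{lem:test-function}(1), mimicking Bao--Shi (who followed Hildebrandt--Jost--Widman). Using Kato's inequality $|\nabla|\nabla\gamma|^2|^2\leqslant4|\nabla\gamma|^2|\nabla d\gamma|^2$ and the inequality for $\mathcal L_{\mathbf V}\psi$, a direct computation gives, away from zeros of $F$,
\[
\mathcal L_{\mathbf V}F\geqslant c_0(b-\psi)^2F^2-\frac{C\,|\nabla F|^2}{F}\ \ (\text{or a gradient term }\langle\nabla\psi,\nabla F\rangle/(b-\psi)),
\]
with $c_0=2\cos\Lambda/b-2>0$ after choosing $b$ close to $1$ relative to $\cos\Lambda$. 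Here the key point is $2(b-\psi)^{-1}\cos\overline d\geqslant 2\cos\Lambda/b$ beating the $-2|\nabla\gamma|^4$ term; this is precisely the choice made in \cite{bao2014gauss}. The $\lambda$-terms have already been discarded by the sign assumption.

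Then, on a geodesic ball $D_R(p)$ of the complete manifold $\Sigma$, we apply the Omori--Yau/Cheng--Yau maximum principle argument to $G=(R^2-r^2)^2F$, where $r$ is the intrinsic distance. At a maximum point we use $\nabla G=0$ and $\mathcal L_{\mathbf V}G\leqslant0$. Laplacian comparison requires a Ricci lower bound; by the Gauss equation $\operatorname{Ric}\geqslant -|B|^2\geqslant -C|\nabla\gamma|^2$, which is absorbed using $F$ itself. Alternatively, we handle this with a Calabi trick and the bound $r\Delta r\leqslant C(1+r\sqrt{\sup_{D_R}|B|^2})$, in the way Bao--Shi do, obtaining $\sup_{D_{R/2}}F\leqslant C/R$ plus terms involving the bounded drift $|\mathbf V|\leqslant1$ and $|H|\leqslant1+|\lambda|$. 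Letting $R\to\infty$ gives $|\nabla\gamma|\equiv0$, so $B\equiv0$ and $\Sigma$ is a hyperplane.

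The main obstacle is this localization step: the drift term contributes $|\mathbf V||\nabla r|\,R$-type errors that only decay like $1/R$ after the correct weighting, and the Ricci curvature is not a priori bounded below. I would first try to reproduce verbatim the estimate of \cite{bao2014gauss}, which handles exactly the drift $\mathbf V$ with $|\mathbf V|\leqslant1$ and bounded $H$. In our setting these facts are automatic from \eqref{e-T} and \eqref{eq-lambdatranslatingsoliton}, and the only new term, $2\lambda B(\nabla\gamma,\nabla\gamma)$, has a favorable sign.
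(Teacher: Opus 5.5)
Your reduction to a differential inequality for $F=|\nabla\gamma|^2/(b-\psi)^2$ matches the paper: the same test function, the Bochner formula with the $\lambda B$ term discarded by sign, and the bounded drift $|\mathbf V|\leqslant1$. However, two steps fail as written.

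\textbf{The leading coefficient.} Your estimate $\cos\overline d/(b-\psi)\geqslant\cos\Lambda/b$ gives $c_0=2\cos\Lambda/b-2$, which is positive only if $b<\cos\Lambda$. Choosing $b$ close to $1$ makes it negative. For $b-\psi$ to stay away from zero, $b$ must also lie above $1-\cos\Lambda$, the supremum of $\varphi$ on $B^{S^n}_\Lambda(y_0)$. Both requirements hold only when $\cos\Lambda>\tfrac12$, i.e.\ $\Lambda<\pi/3$. What you are missing is the exact identity $\cos\overline d-(b-\varphi)=1-b$, forced by $\varphi=1-\cos\overline d$. It gives $\cos\overline d/(b-\psi)=1+(1-b)/(b-\psi)\geqslant 1/b$. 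The good term is then $2(1-b)(b-\psi)F^2\geqslant 2(1-b)(b-1+\cos\Lambda)F^2$ for every $b\in(1-\cos\Lambda,1)$. This is what the paper uses, and it covers all $\Lambda<\pi/2$.

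\textbf{The localization.} With the intrinsic distance you need a Ricci lower bound on all of $D_R(p)$. Laplacian comparison at the maximum point $x_0$ depends on curvature along a minimizing geodesic from $p$, not at $x_0$ alone. So the pointwise bound $\operatorname{Ric}\geqslant-C|B|^2$ at $x_0$ cannot be absorbed by $F(x_0)$. Moreover, $\sup_{D_R}|B|^2$ is exactly the unknown, and the weighted quantity $G$ gives no control near $\partial D_R$. Note also that a bound on $H$ alone, which is all Bao--Shi assume, never yields a Ricci lower bound, so their argument is not of this type.

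The paper instead uses the extrinsic distance $r=|X|$. For it, $|\nabla r|\leqslant1$ and $\Delta r^2=2n+2\langle\mathbf H,X\rangle\leqslant 2n+2(1+|\lambda|)r$, so no curvature of $\Sigma$ enters. At an interior maximum of $G=(R^2-r^2)^2F$ on $\Sigma\cap\overline{B^{n+1}_R(0)}$, substitute $\nabla F/F=4r\nabla r/(R^2-r^2)$, $|\nabla\psi|\leqslant|\nabla\gamma|=(b-\psi)F^{1/2}$ and $|\mathbf V|\leqslant1$. This gives $G(x_0)^{1/2}\leqslant C(R^{3/2}+R)$, hence $\sup_{\Sigma\cap B_{R/2}}F^{1/2}\leqslant C(R^{-1/2}+R^{-1})\to0$.

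This route needs $\Sigma\cap\overline{B_R}$ to be compact, which follows from your hypotheses. Since $\langle\gamma,y_0\rangle\geqslant\cos\Lambda>0$, the orthogonal projection $\pi$ onto $y_0^{\perp}$ is a local diffeomorphism with $|d\pi(v)|\geqslant\cos\Lambda\,|v|$. Hence $\pi^*g_{\mathrm{Euc}}$ is complete and $\pi$ is a covering of $\mathbb{R}^n$. So $\Sigma$ is an entire graph of slope at most $\tan\Lambda$, and in particular it is proper.
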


\begin{proof}
	Choosing a convex function on $B_\Lambda^{S^n}(y_0)$ as above, by direct computation we have
	\[
	\Delta\varphi(\gamma(x)) = \operatorname{Hess}\varphi(\nabla \gamma, \nabla \gamma) + \langle D\varphi, \tau(\gamma) \rangle = \cos \overline{d}|\nabla \gamma|^2 - \langle \mathbf{V}, \nabla\varphi \rangle.
	\]
	Define $\phi(x) = \frac{|\nabla \gamma|^2(x)}{(b - \varphi(\gamma(x)))^2}$. Then
	\[
	\nabla\phi(x) = \frac{\nabla|\nabla \gamma|^2}{(b - \varphi)^2} + \frac{2|\nabla \gamma|^2\nabla\varphi}{(b - \varphi)^3}.
	\]
	and
	\begin{equation}\label{eq-Delatphi}
		\begin{split}
			\Delta\phi(x) &= \frac{\Delta|\nabla \gamma|^2}{(b-\varphi)^2} + \frac{4\langle\nabla\varphi,\nabla|\nabla \gamma|^2\rangle}{(b-\varphi)^3} + \frac{2\Delta\varphi|\nabla \gamma|^2}{(b-\varphi)^3} + \frac{6|\nabla\varphi|^2|\nabla \gamma|^2}{(b-\varphi)^4}\\
			&= \frac{2|\nabla d\gamma|^2 - \langle \mathbf{V},\nabla|\nabla \gamma|^2\rangle - 2|\nabla \gamma|^4}{(b-\varphi)^2} + \frac{4\langle\nabla\varphi,\nabla|\nabla \gamma|^2\rangle}{(b-\varphi)^3} + \frac{2\lambda B(\nabla \gamma,\nabla \gamma)}{(b-\varphi)^2} \\
			&\quad + \frac{2\cos \overline{d}|\nabla \gamma|^4 - 2\langle \mathbf{V},\nabla\varphi\rangle|\nabla \gamma|^2}{(b-\varphi)^3} + \frac{6|\nabla\varphi|^2|\nabla \gamma|^2}{(b-\varphi)^4},
		\end{split}
	\end{equation}
	because
	\[
	\langle \mathbf{V},\nabla\phi\rangle = \frac{\langle \mathbf{V},\nabla|\nabla \gamma|^2\rangle}{(b-\varphi)^2} + \frac{2|\nabla \gamma|^2\langle \mathbf{V},\nabla\varphi\rangle}{(b-\varphi)^3},
	\]
	\[
	\frac{\langle\nabla\varphi,\nabla\phi\rangle}{b-\varphi} = \frac{\langle\nabla\varphi,\nabla|\nabla \gamma|^2\rangle}{(b-\varphi)^3} + \frac{2|\nabla \gamma|^2|\nabla\varphi|^2}{(b-\varphi)^4},
	\]
	and
	\[
	\frac{2|\nabla d\gamma|^2}{(b-\varphi)^2} + \frac{2|\nabla\varphi|^2|\nabla \gamma|^2}{(b-\varphi)^4} \geqslant \frac{4|\nabla\varphi||\nabla \gamma||\nabla d\gamma|}{(b-\varphi)^3}.
	\]
	Then \eqref{eq-Delatphi} becomes
	\[
	\begin{aligned}
		\Delta\phi(x) &\geqslant \frac{2\cos \overline{d}|\nabla \gamma|^4}{(b-\varphi)^3} - \frac{2|\nabla \gamma|^4}{(b-\varphi)^2} + \frac{2\langle\nabla\varphi,\nabla\phi\rangle}{b-\varphi} - \langle \mathbf{V},\nabla\phi\rangle + \frac{2\lambda B(\nabla \gamma,\nabla \gamma)}{(b-\varphi)^2} \\
	    &\geqslant 2\cos \overline{d}(b-\varphi)\phi^2 - 2(b-\varphi)^2\phi^2 + \frac{2\langle\nabla\varphi,\nabla\phi\rangle}{b-\varphi} - \langle \mathbf{V},\nabla\phi\rangle + \frac{2\lambda B(\nabla \gamma,\nabla \gamma)}{(b-\varphi)^2} \\
		&\geqslant 2\cos \overline{d}(b-\varphi)\phi^2 - 2(b-\varphi)^2\phi^2 + \frac{2\langle\nabla\varphi,\nabla\phi\rangle}{b-\varphi} - \langle \mathbf{V},\nabla\phi\rangle,
	\end{aligned}
	\]
	where we used the assumption that $\lambda B$ is positive semi-definite.
    
    Once we have obtained the above inequality, the remaining step is the
    localization argument of Bao--Shi (\cite{bao2014gauss}), and we only
    indicate why it applies here without any extra hypothesis. Their bounded mean curvature assumption enters only through the estimate
    $\Delta r^{2}=2n+2\langle\mathbf{H},X\rangle\leqslant C(1+r)$ for the extrinsic distance $r(x)=|X(x)|$; in our setting this is automatic, since the soliton equation \eqref{eq-lambdatranslatingsoliton} and $|\mathbf{T}|=1$ give
    $|H|=|\langle \mathbf{T},\nu\rangle+\lambda|\leqslant1+|\lambda|$. 
    
    The only other input of their argument is the boundedness of the drift vector, which in our case follows from $|V|\leqslant|\mathbf{T}|=1$. Since $\Sigma$ is complete, $\Sigma\cap\overline{B^{n+1}_R(0)}$ is compact, so $G=(R^{2}-r^{2})^{2}\phi$ attains its maximum at an interior point $x_{0}$, where $\nabla G=0$ and $\Delta G\leqslant0$.
    Substituting $\nabla\phi/\phi=4r\nabla r/(R^{2}-r^{2})$ into the above
    inequality, multiplying by $(R^{2}-r^{2})^{2}$, and using
    $\cos\overline{d}-(b-\varphi)=1-b>0$ and
    $b-\varphi\geqslant b-(1-\cos\Lambda)>0$, one obtains
    $G(x_{0})^{\frac12}\leqslant C\bigl(R^{\frac32}+R\bigr)$ with $C$ independent of $R$. Since $R^{2}-r^{2}\geqslant\frac34R^{2}$ on $\Sigma\cap B^{n+1}_{R/2}(0)$, this yields
    \[
     \sup_{\Sigma\cap B^{n+1}_{R/2}(0)}
     \frac{|\nabla\gamma|}{b-\varphi(\gamma)}
     \leqslant C\Bigl(R^{-\frac12}+R^{-1}\Bigr)\longrightarrow0
     \quad\text{as }R\to\infty.
    \]
    Hence $\nabla\gamma\equiv0$ on $\Sigma$, that is, $h_{ij}\equiv0$, and
    $\Sigma$ must be a hyperplane.
\end{proof}

\subsection*{Rigidity Theorem under Integral Assumptions}

In order to get rigidity result for $\lambda$-translating solitons under the integral assumption, we need the following lemma. 

\begin{lemma}[{\cite{michael1973sobolev}}]\label{lem:sobolev}
	Let $\mathcal{M}$ be a smooth immersed submanifold in $\mathbb{R}^{n+1}$ and $g$ be a non-negative smooth function with compact support, then
	\begin{equation}\label{eq:sobolev}
		\kappa^{-1}\left(\int_\mathcal{M} g^{\frac{2n}{n-2}}d\mu\right)^{\frac{n-2}{n}} \leqslant \int_\mathcal{M} |\nabla g|^2 d\mu + \frac{1}{2}\int_\mathcal{M} |\mathbf{H}|^2 g^2 d\mu,
	\end{equation}
	where $\kappa$ is a constant.
\end{lemma}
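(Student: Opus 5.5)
The plan is to derive \eqref{eq:sobolev} from the classical $L^1$ Michael--Simon inequality, which we take with its proof via the first variation formula. Throughout, $n\geqslant 3$, since the exponent $\frac{2n}{n-2}$ only makes sense then.

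\textbf{Step 1: the $L^1$ inequality.} We establish that for every non-negative $C^1$ function $h$ with compact support on $\mathcal{M}$,
\[
\Bigl(\int_{\mathcal{M}} h^{\frac{n}{n-1}}\,d\mu\Bigr)^{\frac{n-1}{n}}\leqslant C_n\int_{\mathcal{M}}\bigl(|\nabla h|+|\mathbf{H}|h\bigr)\,d\mu .
\]
\begin{enumerate}[(1)]
\item Test the tangential divergence identity $\operatorname{div}_{\mathcal{M}}(X-x_0)=n$ against $h\,\chi(|X-x_0|)$, with $\chi$ a cutoff approximating $\mathbf{1}_{[0,\rho]}$. Integrating by parts produces the mean curvature term $\langle\mathbf{H},X-x_0\rangle h$ and the gradient term $\langle\nabla h,X-x_0\rangle$.
\item This yields a monotonicity inequality. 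It says that $\rho^{-n}\int_{B_\rho(x_0)}h$ is controlled by its value at large radius plus $\int_0^\infty\rho^{-n}\int_{B_\rho(x_0)}(|\nabla h|+|\mathbf{H}|h)\,d\rho$.
\item At each point $x_0$ with $h(x_0)>0$, compare the density limit as $\rho\to0$, which is at least $\omega_n h(x_0)$, with the vanishing of the ratio as $\rho\to\infty$. This produces a radius $\rho(x_0)$ at which $\int_{B_{5\rho}}h$ is controlled by $\rho\int_{B_\rho}(|\nabla h|+|\mathbf{H}|h)$, with $\rho$ bounded in terms of the total integral.
\item A Vitali covering argument, applied to superlevel sets of $h$ and followed by layer-cake integration, then gives the $L^1$ inequality.
\end{enumerate}
This step is the main obstacle: getting the radius selection and covering right, with constants depending only on $n$, is the genuinely geometric part.

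\textbf{Step 2: substitution.} Apply Step 1 to $h=g^{\frac{2(n-1)}{n-2}}$. The exponent exceeds $1$, so $h$ is $C^1$ with compact support, and
\[
|\nabla h|=\tfrac{2(n-1)}{n-2}\,g^{\frac{n}{n-2}}|\nabla g| .
\]
Note that $h^{\frac{n}{n-1}}=g^{\frac{2n}{n-2}}$. Set $I=\int g^{\frac{2n}{n-2}}$. By Cauchy--Schwarz,
\[
\int g^{\frac{n}{n-2}}|\nabla g|\leqslant I^{1/2}\Bigl(\int|\nabla g|^2\Bigr)^{1/2},
\qquad
\int|\mathbf{H}|\,g^{\frac{2(n-1)}{n-2}}=\int\bigl(g^{\frac{n}{n-2}}\bigr)\bigl(|\mathbf{H}|g\bigr)\leqslant I^{1/2}\Bigl(\int|\mathbf{H}|^2g^2\Bigr)^{1/2}.
\]
The left side of Step 1 is $I^{\frac{n-1}{n}}$. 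Dividing by $I^{1/2}$ (the case $I=0$ is trivial) gives
\[
I^{\frac{n-2}{2n}}\leqslant C\bigl(a+b\bigr),\qquad a=\Bigl(\int|\nabla g|^2\Bigr)^{1/2},\quad b=\Bigl(\int|\mathbf{H}|^2g^2\Bigr)^{1/2}.
\]

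\textbf{Step 3: squaring with the right weights.} Square and use $(a+b)^2\leqslant(1+t)a^2+(1+t^{-1})b^2$ with $t=2$. This gives
\[
I^{\frac{n-2}{n}}\leqslant 3C^2\Bigl(\int|\nabla g|^2+\tfrac12\int|\mathbf{H}|^2g^2\Bigr),
\]
which is \eqref{eq:sobolev} with $\kappa=3C^2$, a constant depending only on $n$.
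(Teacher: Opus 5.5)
The paper gives no proof of this lemma; it only cites Michael--Simon. Your route is the standard way the cited $L^2$ form is obtained: the $L^1$ Michael--Simon inequality, then $h=g^{2(n-1)/(n-2)}$, Cauchy--Schwarz, and squaring with weights $3$ and $\tfrac32$. Steps~2 and~3 are correct as written:
\begin{itemize}
\item $h$ is $C^1$ and $h^{n/(n-1)}=g^{2n/(n-2)}$;
\item dividing by $I^{1/2}$ leaves $I^{(n-2)/(2n)}$;
\item $(a+b)^2\leqslant 3\bigl(a^2+\tfrac12 b^2\bigr)$;
\item $\kappa$ depends only on $n$.
\end{itemize}
The restriction $n\geqslant 3$ is implicit in the statement.

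One claim in your sketch of Step~1 is false as phrased and needs repair. The radius selection in (3) does not produce $\rho<\rho_0$ with $\int_{B_{5\rho}}h\leqslant C_n\,\rho\int_{B_\rho}(|\nabla h|+|\mathbf{H}|h)$. Here is a counterexample on flat $\mathbb{R}^n$. Take (a smoothing of) $h=\psi(|x|)$ with $\psi=1$ on $[0,1]$ and $\psi(r)=\log(L/r)/\log L$ on $[1,L]$.
\begin{itemize}
\item Then $\int h\approx\omega_nL^n/(n\log L)$, so $\rho_0\approx 2L(n\log L)^{-1/n}\ll L$.
\item For $\rho<1$ the right-hand side vanishes.
\item For $1\leqslant\rho<\rho_0$ one computes $\int_{B_{5\rho}}h\big/\bigl(\rho\int_{B_\rho}|\nabla h|\bigr)\geqslant\tfrac{(n-1)5^n}{n}\log\tfrac{L}{5\rho_0}$, which tends to infinity as $L\to\infty$.
\end{itemize}

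What the monotonicity inequality actually gives is $\int_{B_{5\rho}}h\leqslant C_n\,\rho_0\int_{B_\rho}(|\nabla h|+|\mathbf{H}|h)$ for some $\rho<\rho_0=2(\omega_n^{-1}\int h)^{1/n}$. The factor is the \emph{fixed} scale $\rho_0$, not $\rho$. In the contradiction argument, the factor $\rho_0^{-1}$ turns $\int_0^{\rho_0}\tau^{-n}\int_{B_\tau}(|\nabla h|+|\mathbf{H}|h)\,d\tau$ into an average of $\sigma^{-n}\int_{B_\sigma}h$ over $(0,5\rho_0)$. With $\rho$ in place of $\rho_0$ you are left instead with $\int_0\tau^{-n-1}\int_{B_{5\tau}}h\,d\tau$. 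This diverges logarithmically at $0$, because $\int_{B_{5\tau}}h\sim\omega_n(5\tau)^n h(x_0)$.

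Your covering step only ever uses $\rho\leqslant\rho_0$, so nothing downstream changes. The disjoint Vitali family yields $\int_{\{h\geqslant1\}}h\leqslant C_n\bigl(\int h\bigr)^{1/n}\int(|\nabla h|+|\mathbf{H}|h)$. Truncation plus layer-cake integration then gives the $L^1$ inequality.
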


Now we can prove the rigidity theorem under integral assumption:

\begin{theorem}\label{thm:main2}
	Let $\Sigma^n$ be a complete immersed $\lambda$-translating soliton in $\mathbb{R}^{n+1}$. If $ \lambda H \leqslant 0$, and $\Sigma^n$ satisfies integral conditions
	\[
	\left(\int_{\Sigma^n} |B|^n\right)^{\frac{2}{n}} + |\lambda|\left(\int_{\Sigma^n} |B|^{\frac{n}{2}}\right)^{\frac{2}{n}} < \frac{2(n-1)}{n^2\kappa} \quad \text{and} \quad \int_{\Sigma^n} |B|^n e^{\langle \mathbf{T},X \rangle} < \infty,
	\]
	where $\kappa$ is the Sobolev constant, then $|B| \equiv 0$ and $\Sigma^n$ is a hyperplane.
\end{theorem}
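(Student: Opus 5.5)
We follow Xin's argument for translating solitons (Theorem \ref{thm-TSunderInt}) and adapt it to the extra term $2\lambda\operatorname{tr}(B^3)$ in \eqref{eq:L-A-squared}. Set $f=|B|$. From \eqref{eq:L-A-squared} and Kato's inequality for Codazzi tensors, $|\nabla B|^2\geqslant (1+\frac{2}{n})|\nabla|B||^2$, we get
\[
f\,\mathcal{L}f\geqslant \tfrac{2}{n}|\nabla f|^2-f^4+\lambda\operatorname{tr}(B^3).
\]
The cubic term has to be controlled by $|\lambda|\,f^3$. The plan is to use $|\operatorname{tr}(B^3)|\leqslant |B|^3$. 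Since $\lambda H\leqslant 0$, we can also try to show that $\lambda\operatorname{tr}(B^3)$ is bounded below by $-|\lambda| f^3$ with a favorable constant; the crude bound is enough for the stated hypothesis. This gives
\[
f\,\mathcal{L}f\geqslant \tfrac{2}{n}|\nabla f|^2-f^4-|\lambda|f^3 .
\]

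Next we multiply by $f^{n-2}\eta^2 e^{\langle\mathbf{T},X\rangle}$, where $\eta$ is a cutoff function, and integrate. Because $\mathcal{L}$ is self-adjoint for the weighted measure, integrating by parts gives an inequality of the form
\[
c_n\int |\nabla(f^{n/2})|^2\eta^2\rho\leqslant \int (f^{n+2}+|\lambda|f^{n+1})\eta^2\rho+C\int f^n|\nabla\eta|^2\rho ,
\]
with $\rho=e^{\langle\mathbf{T},X\rangle}$ and an explicit constant $c_n$ coming from $\frac{2}{n}$ and the power $n-2$.

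To remove the weight, we apply the Sobolev inequality (Lemma \ref{lem:sobolev}) to $g=f^{n/2}\eta\rho^{1/2}$. Using \eqref{eq:grad-rho-half}, $\nabla\rho^{1/2}=\frac12\rho^{1/2}\mathbf V$ with $|\mathbf V|\leqslant 1$. Expanding $|\nabla g|^2$ creates cross terms $\langle\nabla(f^{n/2}\eta),\mathbf V\rangle f^{n/2}\eta\rho$. The key is to integrate these by parts: $\operatorname{div}(\rho\mathbf V)=\Delta\rho=\rho(1+\lambda(H-\lambda))$ by \eqref{eq:laplace-rho}. This turns the cross terms into $-\frac12\int f^n\eta^2\rho\,(1+\lambda H-\lambda^2)$. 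The sign condition $\lambda H\leqslant 0$ makes this contribution favorable, and the $|\mathbf V|^2/4$ term is absorbed using $|\mathbf V|^2=1-(H-\lambda)^2$. The mean curvature term $\frac12|\mathbf H|^2g^2$ is also handled here, via $|\mathbf H|^2\leqslant n|B|^2$ or by combining with the $(H-\lambda)^2$ term.

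The nonlinear terms are then estimated by Hölder's inequality:
\[
\int f^2\, g^2\leqslant \Bigl(\int f^n\Bigr)^{2/n}\Bigl(\int g^{\frac{2n}{n-2}}\Bigr)^{\frac{n-2}{n}},\qquad \int f\,g^2\leqslant \Bigl(\int f^{n/2}\Bigr)^{2/n}\Bigl(\int g^{\frac{2n}{n-2}}\Bigr)^{\frac{n-2}{n}}.
\]
With the smallness assumption $\bigl(\int|B|^n\bigr)^{2/n}+|\lambda|\bigl(\int|B|^{n/2}\bigr)^{2/n}<\frac{2(n-1)}{n^2\kappa}$, these terms are absorbed into the left-hand side. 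What remains is
\[
\Bigl(\int g^{\frac{2n}{n-2}}\Bigr)^{\frac{n-2}{n}}\leqslant C\int f^n|\nabla\eta|^2\rho .
\]
We take $\eta$ equal to $1$ on $B_R$, supported in $B_{2R}$, with $|\nabla\eta|\leqslant 1/R$. The assumption $\int|B|^n\rho<\infty$ makes the right side tend to $0$ as $R\to\infty$. Hence $f^{n/2}\rho^{1/2}\equiv0$, so $|B|\equiv0$ and $\Sigma$ is a hyperplane.

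The main obstacle is the bookkeeping in the third paragraph. The drift and weight terms must produce exactly the constant $\frac{2(n-1)}{n^2\kappa}$, which means choosing the power of $f$ and the Young's-inequality parameters so that the leftover $|\nabla(f^{n/2})|^2$ coefficient matches $\kappa^{-1}$. The sign condition $\lambda H\leqslant0$ has to be used precisely where the $\Delta\rho$ term appears. If the constants do not close, the first thing to try is a different exponent, using $f^{p}$ with $p$ tuned instead of $n/2$.
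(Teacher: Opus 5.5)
Your outline is the paper's proof essentially step for step: multiply \eqref{eq:L-A-squared} by $|B|^{n-2}\eta^2\rho$, apply Kato, apply Sobolev to $|B|^{n/2}\eta\rho^{1/2}$, integrate the cross term by parts via $\Delta\rho$, use H\"older with $|\operatorname{tr}(B^3)|\leqslant|B|^3$, absorb, and cut off. However, one ingredient is false as stated. The refined inequality $|\nabla B|^2\geqslant(1+\frac2n)|\nabla|B||^2$ needs $B$ trace-free, or at least $H$ constant. Here $H$ varies. On a cylinder $\gamma\times\mathbb{R}^{n-1}$ over a planar curve of nonconstant curvature $k$ (for instance the generating curve of the paper's cylindrical $\lambda$-translators), one has $|\nabla B|=|k'|=|\nabla|B||$ wherever $k\neq0$.

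You do not need the refined version. The plain Kato inequality $|\nabla B|\geqslant|\nabla|B||$, which is what the paper uses, gives $(n-1-\varepsilon)\int|B|^{n-2}|\nabla|B||^2\eta^2\rho\leqslant\int(|B|^{n+2}+|\lambda||B|^{n+1})\eta^2\rho+\varepsilon^{-1}\int|B|^n|\nabla\eta|^2\rho$. Combine this with $|\nabla(|B|^{n/2}\eta)|^2\leqslant\frac{n^2}{2}|B|^{n-2}|\nabla|B||^2\eta^2+2|B|^n|\nabla\eta|^2$. You get exactly the factor $\frac{n^2}{2(n-1-\varepsilon)}$ in front of $(\int|B|^n)^{2/n}+|\lambda|(\int|B|^{n/2})^{2/n}$. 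So the constants do close against $\kappa^{-1}$ by the strict hypothesis with $\varepsilon$ small, and no change of exponent is needed.

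Tighten the zeroth-order bookkeeping as well. The cross term $-\frac12\int|B|^n\eta^2\rho\,(1+\lambda H-\lambda^2)$ is not favorable by itself under $\lambda H\leqslant0$, since its $\lambda H$ part has the wrong sign. Your option $|\mathbf H|^2\leqslant n|B|^2$ would add $\frac n2\int|B|^{n+2}\eta^2\rho$. After H\"older this requires a strictly stronger smallness condition than the one stated. What works is the exact identity
\[
\tfrac14|\mathbf V|^2-\tfrac12\bigl(1+\lambda(H-\lambda)\bigr)+\tfrac12H^2=-\tfrac14|\mathbf V|^2+\tfrac12\lambda H=\tfrac14\bigl(H^2+\lambda^2-1\bigr).
\]
This is $\leqslant0$ because $\lambda H\leqslant0$ gives $H^2+\lambda^2\leqslant(H-\lambda)^2=1-|\mathbf V|^2\leqslant1$. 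That is precisely where the paper uses the sign hypothesis, after which these terms are simply dropped.

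Finally, $\Sigma$ is only assumed complete, not proper. So take the cutoff $\eta$ with respect to intrinsic distance, which guarantees the compact support that Lemma \ref{lem:sobolev} requires.
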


\begin{proof}
	Let $\eta$ be a smooth function with compact support in $\Sigma^n$. Multiplying $\eta^2|B|^{n-2}\rho$ on both sides of \eqref{eq:L-A-squared} yields
	\[
	\begin{aligned}
		\int_{\Sigma^n} \eta^2|B|^{n-2}\rho\mathcal{L}|B|^2 &= 2\int_{\Sigma^n} |\nabla B|^2|B|^{n-2}\eta^2\rho - 2\int_{\Sigma^n} |B|^{n+2}\eta^2\rho \\
		&\quad + 2\lambda\int_{\Sigma^n} |B|^{n-2}\eta^2\operatorname{tr}(B^3)\rho.
	\end{aligned}
	\]
	On the other hand, integrating by parts yields
	\[
	\begin{aligned}
		\int_{\Sigma^n} \eta^2|B|^{n-2}\rho\mathcal{L}|B|^2 &= -\int_{\Sigma^n} \langle \nabla(\eta^2|B|^{n-2}),\nabla|B|^2 \rangle\rho \\
		&= -\int_{\Sigma^n} 2(n-2)\eta^2|B|^{n-2}|\nabla |B||^2\rho - 4\int_{\Sigma^n}|B|^{n-1}\eta\langle\nabla\eta,\nabla|B|\rangle\rho.
	\end{aligned}
	\]
	Using Kato's inequality, we have
	\[
	\begin{aligned}
		0 &\geqslant (n-1)\int_{\Sigma^n} \eta^2|B|^{n-2}|\nabla |B||^2\rho + 2\int_{\Sigma^n} |B|^{n-1}\eta\langle\nabla\eta,\nabla|B|\rangle\rho \\
		&\quad - \int_{\Sigma^n} |B|^{n+2}\eta^2\rho + \lambda\int_{\Sigma^n} |B|^{n-2}\eta^2\operatorname{tr}(B^3)\rho.
	\end{aligned}
	\]
	By the Cauchy inequality, for any $\varepsilon > 0$, the above inequality becomes
	\begin{equation}\label{eq:cauchy-ineq}
		\begin{aligned}
			(n-1)\int_{\Sigma^n} \eta^2|B|^{n-2}|\nabla |B||^2\rho &\leqslant 2\int_{\Sigma^n} |B|^{n-1}\eta|\nabla\eta||\nabla |B||\rho \\
			&\quad + \int_{\Sigma^n} |B|^{n+2}\eta^2\rho - \lambda\int_{\Sigma^n} |B|^{n-2}\eta^2\operatorname{tr}(B^3)\rho \\
			&\leqslant \varepsilon\int_{\Sigma^n} |B|^{n-2}\eta^2|\nabla |B||^2\rho + \frac{1}{\varepsilon}\int_{\Sigma^n} |B|^n|\nabla\eta|^2\rho \\
			&\quad + \int_{\Sigma^n} |B|^{n+2}\eta^2\rho - \lambda\int_{\Sigma^n} |B|^{n-2}\eta^2\operatorname{tr}(B^3)\rho.
		\end{aligned}
	\end{equation}
	
	Let $f = |B|^{\frac{n}{2}}\rho^{\frac{1}{2}}\eta$. 
	Integrating by parts and using \eqref{eq:grad-rho}, \eqref{eq:grad-rho-half} and \eqref{eq:laplace-rho} then we have
	\begin{equation}\label{eq:grad-f}
		\begin{split}
			&\quad\int_{\Sigma^n} |\nabla f|^2 \\
            &= \int_{\Sigma^n} |\nabla(|B|^{\frac{n}{2}}\eta)|^2\rho + 2\int_{\Sigma^n} |B|^{\frac{n}{2}}\eta\nabla(|B|^{\frac{n}{2}}\eta)\rho^{\frac{1}{2}}\nabla\rho^{\frac{1}{2}} + \int_{\Sigma^n} |B|^n\eta^2|\nabla\rho^{\frac{1}{2}}|^2 \\
			&= \int_{\Sigma^n} |\nabla(|B|^{\frac{n}{2}}\eta)|^2\rho + \frac{1}{2}\int_{\Sigma^n} \nabla(|B|^n\eta^2)\nabla\rho + \frac{1}{4}\int_{\Sigma^n} |B|^n\eta^2|\mathbf{V}|^2\rho \\
			&= \int_{\Sigma^n} |\nabla(|B|^{\frac{n}{2}}\eta)|^2\rho - \frac{1}{2}\int_{\Sigma^n} |B|^n\eta^2\Delta\rho + \frac{1}{4}\int_{\Sigma^n} |B|^n\eta^2|\mathbf{V}|^2\rho \\
			&= \int_{\Sigma^n} |\nabla(|B|^{\frac{n}{2}}\eta)|^2\rho - \frac{1}{2}\int_{\Sigma^n} |B|^n\eta^2\rho\big(1 + \lambda(H - \lambda)\big) + \frac{1}{4}\int_{\Sigma^n} |B|^n\eta^2|\mathbf{V}|^2\rho,
		\end{split}
	\end{equation}
	Combining \eqref{eq-lambdatranslatingsoliton}, the Sobolev inequality \eqref{eq:sobolev} and \eqref{eq:grad-f}, we have
	\begin{equation}\label{eq:sobolev-combine}
		\begin{split}
			\kappa^{-1}\left(\int_{\Sigma^n} |f|^{\frac{2n}{n-2}}\right)^{\frac{n-2}{n}} &\leqslant \int_{\Sigma^n} |\nabla f|^2 + \frac{1}{2}\int_{\Sigma^n} |B|^n\eta^2 H^2\rho \\
			&\leqslant \int_{\Sigma^n} |\nabla(|B|^{\frac{n}{2}}\eta)|^2\rho - \frac{1}{2}\int_{\Sigma^n} |B|^n\eta^2\rho\big(1 + \lambda(H - \lambda)\big) \\
			&\quad + \frac{1}{4}\int_{\Sigma^n} |B|^n\eta^2|\mathbf{V}|^2\rho + \frac{1}{2}\int_{\Sigma^n} |B|^n\eta^2 H^2\rho \\
			&= \frac{1}{4}\int_{\Sigma^n} |B|^n\eta^2\rho\big[|\mathbf{V}|^2 - 2\big(1 + \lambda(H - \lambda)\big) + 2H^2\big]\\
            &\quad+ \int_{\Sigma^n} |\nabla(|B|^{\frac{n}{2}}\eta)|^2\rho.
		\end{split}
	\end{equation}
	Combining the Cauchy inequality, \eqref{eq:cauchy-ineq} and \eqref{eq:sobolev-combine}, we have

\begin{equation}\label{eq:holder-combine}
		\begin{split}
			&\quad\kappa^{-1}\left(\int_{\Sigma^n} |f|^{\frac{2n}{n-2}}\right)^{\frac{n-2}{n}}\\
			&\leqslant \frac{n^2}{2}\int_{\Sigma^n} |\nabla |B||^2|B|^{n-2}\eta^2\rho + 2\int_{\Sigma^n} |B|^n|\nabla\eta|^2\rho - \frac{1}{4}\int_{\Sigma^n} |B|^n\eta^2\rho|\mathbf{V}|^2 \\
            &\quad +\frac{1}{2}\int_{\Sigma^n} |B|^n\eta^2\rho\lambda H
            \\
			&\leqslant \frac{n^2}{2(n-1-\varepsilon)}\left(\int_{\Sigma^n} |B|^{n+2}\eta^2\rho + \frac{1}{\varepsilon}\int_{\Sigma^n} |B|^n|\nabla\eta|^2\rho\right) \\
			&\quad + 2\int_{\Sigma^n} |B|^n|\nabla\eta|^2\rho - \frac{1}{4}\int_{\Sigma^n} |B|^n\eta^2\rho|\mathbf{V}|^2 +\frac{1}{2}\int_{\Sigma^n} |B|^n\eta^2\rho\lambda H
            \\
            &\quad -\frac{\lambda n^2}{2(n-1-\varepsilon)}\int_{\Sigma^n} |B|^{n-2}\eta^2\operatorname{tr}(B^3)\rho\\
			&\leqslant \frac{n^2}{2(n-1-\varepsilon)}\left(\int_{\Sigma^n} |B|^{n+2}\eta^2\rho- \lambda\int_{\Sigma^n} |B|^{n-2}\eta^2\operatorname{tr}(B^3)\rho\right) \\
            &\quad+ \left(\frac{n^2}{2\varepsilon(n-1-\varepsilon)} + 2\right)\int_{\Sigma^n} |B|^n|\nabla\eta|^2\rho\\
            &\quad - \frac{1}{4}\int_{\Sigma^n} |B|^n\eta^2\rho|\mathbf{V}|^2 +\frac{1}{2}\int_{\Sigma^n} |B|^n\eta^2\rho\lambda H.
		\end{split}
	\end{equation}

    Notice that we have the inequality
    \[
    |tr(B^3)|\leqslant|B|^3.
    \]
	Using the Hölder inequality
	\[
	\int_{\Sigma^n} |B|^{n+2}\eta^2\rho \leqslant \left(\int_{\Sigma^n} |B|^n\right)^{\frac{2}{n}} \left(\int_{\Sigma^n} (|B|^n\eta^2\rho)^{\frac{n}{n-2}}\right)^{\frac{n-2}{n}},
	\]
	and
	\[
	\int_{\Sigma^n} |B|^{n+1}\eta^2\rho \leqslant \left(\int_{\Sigma^n} |B|^{\frac{n}{2}}\right)^{\frac{2}{n}} \left(\int_{\Sigma^n} (|B|^n\eta^2\rho)^{\frac{n}{n-2}}\right)^{\frac{n-2}{n}}
	\]
	in \eqref{eq:holder-combine} yields
	\[
	\begin{aligned}
		&\quad\kappa^{-1}\left(\int_{\Sigma^n} |f|^{\frac{2n}{n-2}}\right)^{\frac{n-2}{n}}\\
		&\leqslant \left[\frac{n^2}{2(n-1-\varepsilon)}\left(\int_{\Sigma^n} |B|^n\right)^{\frac{2}{n}} + \frac{|\lambda|n^2}{2(n-1-\varepsilon)}\left(\int_{\Sigma^n} |B|^{\frac{n}{2}}\right)^{\frac{2}{n}}\right]\left(\int_{\Sigma^n} (|B|^n\eta^2\rho)^{\frac{n}{n-2}}\right)^{\frac{n-2}{n}}\\
        &\quad+ \left(\frac{n^2}{2\varepsilon(n-1-\varepsilon)} + 2\right)\int_{\Sigma^n} |B|^n|\nabla\eta|^2\rho- \frac{1}{4}\int_{\Sigma^n} |B|^n\eta^2\rho|\mathbf{V}|^2 +\frac{1}{2}\int_{\Sigma^n} |B|^n\eta^2\rho\lambda H\\
		&= \frac{n^2}{2(n-1-\varepsilon)}\left[\left(\int_{\Sigma^n} |B|^n\right)^{\frac{2}{n}} + |\lambda|\left(\int_{\Sigma^n} |B|^{\frac{n}{2}}\right)^{\frac{2}{n}}\right]\left(\int_{\Sigma^n} f^{\frac{2n}{n-2}}\right)^{\frac{n-2}{n}} \\
		&\quad + \left(\frac{n^2}{2\varepsilon(n-1-\varepsilon)} + 2\right)\int_{\Sigma^n} |B|^n|\nabla\eta|^2\rho - \frac{1}{4}\int_{\Sigma^n} f^{2}|\mathbf{V}|^2 +\frac{1}{2}\int_{\Sigma^n} f^{2}\lambda H.
	\end{aligned}
	\]
	If we assume that
	\[
	\left(\int_{\Sigma^n} |B|^n\right)^{\frac{2}{n}} + |\lambda|\left(\int_{\Sigma^n} |B|^{\frac{n}{2}}\right)^{\frac{2}{n}} < \frac{2(n-1)}{n^2\kappa} \quad \text{and} \quad \lambda H \leqslant 0,
	\]
	then we arrive at by choosing $\varepsilon>0$ sufficiently small that 
	\[
	\left(\int_{\Sigma^n} |f|^{\frac{2n}{n-2}}\right)^{\frac{n-2}{n}} \leqslant C(n,\varepsilon)\int_{\Sigma^n} |B|^n|\nabla\eta|^2\rho,
	\]
	where $C(n,\varepsilon)$ is a positive constant. Choosing an appropriate cut off function $\eta$ leads to the theorem.
\end{proof}

\end{document}